\newtheorem{state}{Proposition}
\newtheorem{example}{Example}
\newtheorem{corollary}{Corollary}
\newtheorem{lemma}{Lemma}
\newtheorem{definition}{Definition}
\newcommand{\la}{\boldsymbol{\lambda}}
\newcommand{\ka}{\boldsymbol{\kappa}}
\newcommand{\bea}{\begin{eqnarray}}
\newcommand{\eea}{\end{eqnarray}}
\newcommand{\be}{\begin{eqnarray*}}
\newcommand{\ee}{\end{eqnarray*}}
\definecolor{gold}{rgb}{0.75,0.55,0}
\definecolor{GreenYellow}{named}{GreenYellow}
\definecolor{Yellow}{named}{Yellow}
\definecolor{Goldenrod}{named}{Goldenrod}
\definecolor{Dandelion}{named}{Dandelion}
\definecolor{Apricot}{named}{Apricot}
\definecolor{Peach}{named}{Peach}
\definecolor{Melon}{named}{Melon}
\definecolor{YellowOrange}{named}{YellowOrange}
\definecolor{Orange}{named}{Orange}
\definecolor{BurntOrange}{named}{BurntOrange}
\definecolor{Bittersweet}{named}{Bittersweet}
\definecolor{RedOrange}{named}{RedOrange}
\definecolor{Mahogany}{named}{Mahogany}
\definecolor{Maroon}{named}{Maroon}
\definecolor{BrickRed}{named}{BrickRed}
\definecolor{Red}{named}{Red}
\definecolor{OrangeRed}{named}{OrangeRed}
\definecolor{RubinRed}{named}{RubineRed}
\definecolor{WildStrawberry}{named}{WildStrawberry}
\definecolor{Salmon}{named}{Salmon}
\definecolor{CarnationPink}{named}{CarnationPink}
\definecolor{Magenta}{named}{Magenta}
\definecolor{VioletRed}{named}{VioletRed}
\definecolor{Plum}{named}{Plum}
\definecolor{YellowGreen}{named}{YellowGreen}
\definecolor{RawSienna}{named}{RawSienna}
\definecolor{Sepia}{named}{Sepia}
\definecolor{Brown}{named}{Brown}
\definecolor{Tan}{named}{Tan}
\definecolor{Black}{named}{Black}
\definecolor{White}{named}{White}
\begin{document}
\title[Noncommutative Monomials]{Noncommutative Analogs of Monomial Symmetric Functions, Cauchy Identity and Hall Scalar Product.}
\maketitle
\begin{center}
\author{Lenny Tevlin} \\
\address{Physics Department \\
Yeshiva University \\
500 West 185th Street, \\
New York, N.Y. 10033, USA \\
}
\email{tevlin@yu.edu}
\end{center}

\begin{abstract}
This paper will introduce noncommutative analogs of monomial symmetric functions and
fundamental noncommutative symmetric functions. The expansion of
ribbon Schur functions in both of these basis is nonnegative. With these functions at hand, one can derive
a noncommutative Cauchy identity as well as study a noncommutative scalar product implied by Cauchy identity. This scalar product
seems to the noncommutative analog of Hall scalar product in the commutative theory.
\end{abstract}
\tableofcontents

\section{Introduction.}
\label{sec-intro}

The algebra of noncommutative symmetric functions \textbf{\textsf{NSym}} were introduced in \cite{GL} as a Hopf-dual to that of quasi-symmetric functions
\textbf{\textsf{QSym}}. From this point of view it is entirely natural that generalizations of
some of the bases of classical commutative symmetric functions \cite{M} would exist on the quasi-symmetric side, while others on the
 \textbf{\textsf{NSym}}
side. In particular, monomial symmetric functions $ m_{\la} $ were generalized in \cite{G}  to the quasi-symmetric monomials $ M_I $.
 Noncommutative complete symmetric functions $ S^I $, their dual, were introduced in \cite{GL}.
 Likewise, fundamental quasi-symmetric functions $ L_I $ form another basis of \textbf{\textsf{QSym}}
while ribbon Schur functions $ R^I $ form a dual basis in \textbf{\textsf{NSym}}\footnote{A word about notation convention for different symmetric
functions. I will use the standard notations of \cite{M} for classical symmetric functions, that of \cite{GL} for the rest, with two exceptions.
I will denote fundamental quasi-symmetric functions  $L_I$ following \cite{S} and use $R^I$ with a superscript for ribbon Schur functions. For all newly
introduced objects I will extend the above notation by using the same letter with super- (sub-) script for noncommutative (quasi-symmetric) functions.}.

The purpose of this paper is to show that nevertheless natural analogs of both monomial and fundamental bases exist on \textbf{\textsf{NSym}} side along
with quasi-symmetric analogs of complete and ribbon Schur bases.

The table below displays how functions to be introduced in this paper (marked in bold) fit in the known world.
\begin{center}
\begin{tabular}{lcrr}
& \textbf{\textsf{Sym}} & \textbf{\textsf{QSym}} & \textbf{\textsf{NSym}} \\
 monomial &  $ m_{\la} $ &  $ M_I $  & $ \mathbf{M^I} $  \\
& & & \\
 power sums & $ p_{\la} $  & $\mathbf{\Psi_I} $ &  $ \Psi^I $ \\
& & & \\
elementary &   $ e_{\la} $  & $\mathbf{\Lambda_I} $ &  $ \Lambda^I $  \\
 & & & \\
complete  & $ h_{\la} $  & $ \mathbf{S_I} $ & $ S^I $ \\
& & & \\
Schur & $ s_{\la} $  & $ \mathbf{R_I} $ &  $ R^I $ \\
& & & \\
fundamental & & $L_I $  &  $ \mathbf{L^I} $ \\
\end{tabular}
\end{center}

\section{Notations.}
\label{sec-notations}
\subsection{Compositions and Partitions.}
\label{sec-compart}
Let   $ I = (i_1, \ldots, i_n)$ be a composition, i.e. an ordered set of positive integers $ (i_1, \ldots, i_n) $, called
parts of the composition $ I $.
The sum of all components of the composition, its weight, is denoted by $|I|$
and the number of parts in the composition -- by $ \ell(I) $.

For a composition $ I $ one defines a \textbf{reverse} composition $ \bar{I} = (i_n, \ldots, i_1)$.

Two types of multiplication  were defined in \cite{GL}.  For two compositions $ I = (i_1, \ldots, i_{r-1}, i_r) $
and $J= (j_1, j_2, \ldots, j_s) $ define
\begin{align}
\label{def:multiplication-1}
& I \triangleright J =(i_1, \ldots, i_{r-1}, i_r + j_1, j_2, \ldots, j_s), &  \text{ with }
\ell( I \triangleright J ) = \ell(I) + \ell(J) - 1
& \intertext{and}
\label{def:multiplication-2}
&  I \cdot J = (i_1, \ldots, i_r , j_1,  \ldots, j_s), & \text{ with } \ell(I \cdot J) = \ell(I) + \ell(J)
\end{align}
Parts of the composition $ \widetilde{I} $ \textbf{conjugate} to a composition $ I $ can be read from the
diagram of the composition $ I $  from left to right and from bottom to top:
\begin{example}
For instance, if $ I = (3,1,1,4,2) $, then $ \widetilde{I} = (1,2,1,1,4,1,1)$

\centerline{
\setlength{\unitlength}{0.25pt}
\begin{picture}(900,400)
\put(-150,150){\mbox{$I=$}}
\put(0,200){\framebox(50,50){}}
\put(50,200){\framebox(50,50){}}
\put(100,200){\framebox(50,50){}}
\put(100,150){\framebox(50,50){}}
\put(100,100){\framebox(50,50){}}
\put(100,50){\framebox(50,50){}}
\put(150,50){\framebox(50,50){}}
\put(200,50){\framebox(50,50){}}
\put(250,50){\framebox(50,50){}}
\put(250,0){\framebox(50,50){}}
\put(300,0){\framebox(50,50){}}
\put(400,150){\mbox{$\widetilde{I}=$}}
\put(550,300){\framebox(50,50){}}
\put(550,250){\framebox(50,50){}}
\put(600,250){\framebox(50,50){}}
\put(600,200){\framebox(50,50){}}
\put(600,150){\framebox(50,50){}}
\put(600,100){\framebox(50,50){}}
\put(650,100){\framebox(50,50){}}
\put(700,100){\framebox(50,50){}}
\put(750,100){\framebox(50,50){}}
\put(750,50){\framebox(50,50){}}
\put(750,0){\framebox(50,50){}}
\end{picture}
}
\end{example}
\textbf{Reverse refinement order} for compositions is defined as follows.
Let $ I = (i_1, \ldots, i_n),
J=(j_1, \ldots, j_s) $, $ |J|=|I| $.
Then $ J \preceq I $ if  every part of $J$ can be obtained from consecutive parts of $I$:
\[
J = (i_1 + \ldots + i_{p_1}, i_{p_1 + 1} + \ldots + i_{p_2}, \ldots, i_{p_{k-1}+1} + \ldots + i_{p_k},
\ldots, i_{p_s } + \ldots + i_n )
\]
for some nonnegative $ p_1, \ldots, p_s $. (The convention $ p_0 = 0 $ will be implied below.) \\
\begin{example}
For instance, $ ( 3,3,2) = (3, 1 +2, 2) \preceq (3,1,2,2) $.

\centerline{
\setlength{\unitlength}{0.25pt}
\begin{picture}(300,300)
\put(0,150){\framebox(50,50){}}
\put(50,150){\framebox(50,50){}}
\put(100,150){\framebox(50,50){}}
\put(100,100){\framebox(50,50){}}
\put(150,100){\framebox(50,50){}}
\put(200,100){\framebox(50,50){}}
\put(200,50){\framebox(50,50){}}
\put(250,50){\framebox(50,50){}}
\end{picture}
\begin{picture}(300,300)
\put(150,150){\mbox{$\preceq $}}
\end{picture}
\begin{picture}(300,300)
\put(0,200){\framebox(50,50){}}
\put(50,200){\framebox(50,50){}}
\put(100,200){\framebox(50,50){}}
\put(100,150){\framebox(50,50){}}
\put(100,100){\framebox(50,50){}}
\put(150,100){\framebox(50,50){}}
\put(150,50){\framebox(50,50){}}
\put(200,50){\framebox(50,50){}}
\end{picture}
}
\end{example}
If $I(n)$ is the sequence of all compositions of $n$. Then,
$I(n) = ( 1 \cdot I(n-1) , 1 \triangleright I(n-1) ) $,
where $ 1 \cdot I(n-1) $ and $ 1 \triangleright I(n-1) $ denote respectively the compositions obtained from the compositions of $ I(n-1)$
 by adding $1 $ to their first part according to rules described in (\ref{def:multiplication-1}) and (\ref{def:multiplication-2}) while
other parts remain unchanged. \\
A \textbf{partition} is a composition with weakly decreasing parts, i.e.
\[
\la = ( \lambda_1, \ldots, \lambda_n) \mbox{ with } \lambda_1 \geq \lambda_2 \geq \ldots \geq \lambda_n
\]
The number of times an integer $ i $ occurs in a partition
$\la$ is denoted by $ \mathtt{m_i}(\la) $.
\subsection{Commutative Symmetric Functions.}
\label{sec-com}
Recall notations and definitions form \cite{M}. For every partition $ \la $ one defines monomial symmetric function $ m_{\la} $. \\
With that
 multiplicative bases -- power sums, elementary, and complete symmetric functions are introduced as follows:
\begin{align*}
& p_n = m_{(n)}
& p_{\la} = \prod_{i=1}^{\ell(\la)} p_{\lambda_i} \\
& e_n = m_{1^n}
& e_{\la} = \prod_{i=1}^{\ell(\la)} e_{\lambda_i} \\
& h_n = \sum_{|\la| = n} m_{\la}
& h_{\la} = \prod_{i=1}^{\ell(\la)} h_{\lambda_i} \\
\end{align*}
With involution $ \omega $ defined by
\[
\omega( p_{\la} ) = (-1)^{\ell(\la) - |\la|} p_{\la}
\]
one further has
\begin{align*}
& \omega( h_{\la}) = e_{\la} \\
& \omega( m_{\la} ) = f_{\la},
\end{align*}
where the second line defines forgotten symmetric functions.

Furthermore
\begin{equation}
\label{u}
u_{\boldsymbol{\mu}} = \prod_{ i \geq 1 } \mathtt{m_i} (\boldsymbol{\mu})
\end{equation}
Augmented monomial symmetric function $ \widetilde{m}_{\boldsymbol{\mu}} $ as in Exercise 10, \S 6, Ch. I of \cite{M}:
\begin{equation}
\label{augmon}
\widetilde{m}_{\boldsymbol{\mu}} = u_{\boldsymbol{\mu}} m_{\boldsymbol{\mu}}
\end{equation}
\subsection{Quasideterminants.}
To define noncommutative symmetric functions I will need the notion of a quasideterminant.
The definition of a quasideterminant for an arbitrary matrix was given in \cite{GL}. In general quasideterminant
is not polynomial in its entries. All the matrices that come up in what follows will be almost-triangular with constants
above the main diagonal. Moreover, in principle a matrix $ n \times n $ has $ n^2 $ quasideterminants, which can
be calculated with respect to any element of the matrix. In what follows by quasideterminant I will always mean
the quasideterminant with respect to the element in the lower left corner (see below).

Consider a quasideterminant of an $ n \times n $ almost triangular matrix with free entries $ a_{ij} $ and off-diagonal
elements coming from the first $ n -1 $ letters of an invertible alphabet $\mathbb{B}$.
Such quasideterminant  is polynomial in its entries and according to
Proposition 4.7 of \cite{GR2} can be written as:
\begin{align}
\label{def-quasidet}
& Q_n(\mathbb{B}) \equiv Q_n( b_1, \ldots, b_{n-1}) = \begin{vmatrix}
a_{11} & b_1 & 0 & \ldots & \ldots & \ldots & \ldots  \\
a_{21} & a_{22} & b_2 & 0 & \ldots & \ldots & \ldots  \\
\vdots & \vdots & \vdots & \vdots & \vdots & \vdots & \vdots  \\
a_{j1} & a_{j2} & \ldots  & a_{jj} & b_j  & 0 & \ldots  \\
\vdots & \vdots & \vdots & \vdots & \vdots & \vdots & \vdots  \\
a_{n-1 \ 1} & a_{n-1 \ 2} & \ldots & \ldots & \ldots  & \ldots & b_{n - 1} \\
\fbox{$a_{n1}$} & a_{n2} &  \ldots & a_{ n j} & \ldots & \ldots & a_{nn}
\end{vmatrix}= \nonumber \\
= & \sum_{n \geq j_1 > \ldots > j_k > 1} (-1)^{k+1} a_{ n j_1} b_{j_1 - 1}^{-1} a_{j_1 - 1 \ j_2}b_{j_2 -1}^{-1}
a_{j_2 - 1 \ j_3} \ldots b_{j_k -1}^{-1}a_{j_k - 1 \ 1}
\end{align}
From now on I will assume that $ b_j$'s commute among themselves and with all $ a_{ij} $.
\subsection{Noncommutative Symmetric Functions.}
As mentioned before, the algebra of noncommutative symmetric functions $ \mathbf{NSym} $ has been first defined by Gelfand et. al \cite{GL},
where noncommutative elementary symmetric functions were taken as generators of the algebra. For the purposes of this paper it will be more
 convenient to take power sums $ \Psi_n $ (power sums of the first kind in the terminology of \cite{GL}). \\
Given a (possible infinite) set of noncommutative variables $z_i$, define a noncommutative
\textbf{power sum} symmetric function
\begin{equation}
\label{def-power sums}
\Psi_n = \sum_{i=1} z_i^n
\end{equation}
Then it has been shown in \cite{GL} that noncommutative elementary symmetric functions as a  quasideterminant of an $ n $ by $ n $ matrix:
\begin{equation}
\label{def-elementary}
 n \Lambda_n = (-1)^{n - 1}
\begin{vmatrix}
\Psi_{1} & 1 & 0 & \dots & 0& 0\\
\Psi_{ 2} & \Psi_{1} & 2 & \ldots & 0 & 0 \\
\vdots &  \vdots & \vdots & \vdots &  \vdots & \vdots \\
\Psi_{n-1} & \ldots & \ldots & \ldots&  \Psi_{1}& n - 1 \\
\fbox{$\Psi_{n}$} & \ldots & \ldots & \ldots &\Psi_{2}& \Psi_{1}
\end{vmatrix}
\end{equation}
and noncommutative complete (homogeneous) symmetric functions
\begin{equation}
\label{def-complete}
 n S_n =
\begin{vmatrix}
\Psi_{1} & -( n - 1) & 0 & \dots & 0& 0\\
\Psi_{ 2} & \Psi_{1} & -( n - 2) & \ldots & 0 & 0 \\
\vdots &  \vdots & \vdots & \vdots &  \vdots & \vdots \\
\Psi_{n-1} & \ldots & \ldots & \ldots&  \Psi_{1}& - 1 \\
\fbox{$\Psi_{n}$} & \ldots & \ldots & \ldots &\Psi_{2}& \Psi_{1}
\end{vmatrix} \\
\end{equation}
For every composition $ I = ( i_1, \ldots, i_n)$ define multiplicative bases:
\begin{align}
& \text{ power sums } & \Psi^I = \Psi_{i_1} \Psi_{i_2} \dots \Psi_{i_n} \\
& \text{ complete symmetric functions } & S^I = S_{i_1} S_{i_2} \ldots S_{i_n} \\
& \text{ elementary symmetric functions } & \Lambda^I = \Lambda_{i_1} \Lambda_{i_2} \ldots \Lambda_{i_n}
\end{align}
Ribbon Schur functions have been defined in \cite{GL, G} as
\begin{equation}
\label{def:ribbon-schur}
R^I = (-1)^{k-1}
\begin{vmatrix}
S_{i_k} & 1 & 0 & 0  & \dots & \\
\vdots & & &  & \\
S_{i_3 + \dots + i_k} &  \dots & S_{i_3} & 1 & 0 \\
S_{i_2 + \dots + i_k} & \ldots & S_{i_2 + i_3} & S_{i_2} & 1 \\
\boxed{S_{i_1+ \ldots + i_k}} & \ldots & S_{i_1+ i_2 + i_3} & S_{i_1+ i_2} & S_{i_1}
\end{vmatrix} = \sum_{J \preceq I}
(-1)^{\ell(J) - \ell(I)} S^J ,
\end{equation}
Furthermore same authors have shown that classical involution can be lifted to the noncommutative setting:
\begin{align}
\label{eq-invol}
& \text{ if \ \ } \omega(\Psi_k ) = (-1)^{k-1} \Psi_k \\
& \text{ then \ \ } \omega( S_k ) = \Lambda_k \notag \\
& \text{as well as \ \ } \omega( R^I) = R^{I^{\widetilde{}}} \notag
\end{align}
\section{Results.}
\label{sec-results}
\begin{enumerate}
\item{\textit{Noncommutative Monomial, Forgotten and Fundamental Symmetric Functions.}}

Upon the introduction in Section \ref{sec-prim} of two linear bases in \textsf{\textbf{NSym}}... The first basis is that of
\textbf{noncommutative monomial} symmetric functions, denoted by $ M^I $, which are noncommutative analogs of
 monomial symmetric functions $ m_{\la} $ in the classical theory.

The two functions enjoy the following relationship,
\[
\widetilde{m}_{\boldsymbol{\mu}} =  \sum_{\mathfrak{S}_n} M^{I},
\]
where the sum is over all permutations of composition $ I $ and
$ \boldsymbol{\mu} $ is the partition obtained by ordering parts of $I$.

This relationship will be proved in the Section \ref{sec-monomial}.

The second basis is that of \textbf{noncommutative forgotten} symmetric functions denoted by $ F^{I} $.

It follows that complete symmetric functions continue to be worthy of their
name just as in the commutative case where $ h_n $ is a sum of all monomial functions of the same degree.
The noncommutative version of this relationship is
\[
S_n = \sum_{|I| = n } M^I
\]
In fact one can make a stronger statement
\[
F^{I} = \sum_{J \preceq I} M^J,
\]
where the sum is over all compositions that are less fine than $ I $.
This statement will be proved in Section \ref{sec-triang}.\\
Finally, by analogy with Gessel's   theory of quasi-symmetric functions \cite{G} I define and study
\textbf{fundamental} noncommutative symmetric functions $ L^I $ in Section \ref{sec-fundamental}.
\item{\textit{Ribbon Schur Functions are Fundamental Positive.}}
It turns out that the expansion of ribbon Schur functions is non-negative both monomial and fundamental bases, see Section \ref{sec-transitions}.
\item{\textit{Noncommutative Cauchy Identity and an Analog of Hall scalar product.}}
There is a noncommutative analog of Cauchy identity, with both ribbon Schur functions and noncommutative fundamental symmetric functions playing
a role similar to that of Schur functions in the commutative case:
\[
\sum_I M^I S^I = \sum_I L^I R^I
\]
proved in Section \ref{sec-cauchy}, which allows one to introduce a noncommutative analog of Hall scalar product:
\[
\langle M^I \mid S^J \rangle = \delta_{IJ}
\]
This scalar product shares some properties with its classical counterpart and they are studies further in the same section.
\end{enumerate}
\section{Noncommutative Theory.}
\label{sec-noncom}
\subsection{Noncommutative Monomial and Forgotten Symmetric Functions.}
\label{sec-prim}
Let me begin by introducing one of the main objects of interest -- \textbf{noncommutative monomial symmetric function}
corresponding to a composition $ I = (i_1, \ldots, i_n)$. It is defined as a  quasideterminant of the following $ n $ by $ n $ matrix:
\begin{definition}
\begin{equation}
\label{def-primm}
 n M^I \equiv n M^{(i_1, \ldots, i_n)} = (-1)^{n - 1}
\begin{vmatrix}
\Psi_{i_n} & 1 & 0 & \dots & 0& 0\\
\Psi_{ i_{n -1} + i_n} & \Psi_{i_{n - 1}} & 2 & \ldots & 0 & 0 \\
\vdots &  \vdots & \vdots & \vdots &  \vdots & \vdots \\
\Psi_{i_2 + \ldots + i_n} & \ldots & \ldots & \ldots&  \Psi_{i_2}& n - 1 \\
\fbox{$\Psi_{i_1 + \ldots + i_n}$} & \ldots & \ldots & \ldots &\Psi_{i_1 + i_2}& \Psi_{i_1}
\end{vmatrix}
\end{equation}
\end{definition}
where $n$ is the length of $ I $.
In particular
\[
\Lambda_n= M^{1^n}
\]
where $ \Lambda_n $ is an elementary symmetric function (\ref{def-elementary}). \\
Equivalently, if $ J = (j_1, \ldots, j_s) \preceq I =(i_1, \ldots, i_n) $, i.e.
\[
J = (i_1 + \ldots + i_{p_1}, i_{p_1 + 1} + \ldots + i_{p_2}, \ldots, i_{p_{k-1}+1} + \ldots + i_{p_k},
\ldots, i_{p_s } + \ldots + i_n )
\]
for some nonnegative $ p_1, \ldots, p_s $ ($ p_0 = 0 $), then
\begin{state}
\begin{equation}
\label{eq:pi-through-Psi}
 M^I = \sum_{J \preceq I } \frac{(-1)^{\ell(I) - \ell(J)}}{ \prod_{k=0}^{s-1} (\ell(I) - p_k)} \Psi^J,
\end{equation}
where $ s = \ell(J)$.
\end{state}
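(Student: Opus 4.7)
The plan is to expand the quasideterminant in (\ref{def-primm}) by the explicit formula (\ref{def-quasidet}) and then reindex the resulting sum so that it runs over refinements $J\preceq I$. First I would read off the entries of the matrix in (\ref{def-primm}): the superdiagonal contains the commutative scalars $b_j=j$ for $j=1,\ldots,n-1$, and the entry in row $r$ (counted from the top) and column $c\le r$ is $a_{r,c}=\Psi_{i_{n-r+1}+i_{n-r+2}+\cdots+i_{n-c+1}}$; in particular, the boxed pivot is $a_{n,1}=\Psi_{i_1+\cdots+i_n}$. Substituting these into (\ref{def-quasidet}) writes $n\,M^I$ as a signed sum, indexed by descending chains $n\ge j_1>j_2>\cdots>j_k>1$, of products
\be
a_{n,j_1}\,b_{j_1-1}^{-1}\,a_{j_1-1,j_2}\,b_{j_2-1}^{-1}\cdots b_{j_k-1}^{-1}\,a_{j_k-1,1}.
\ee

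The second step is the change of variables $p_\ell:=n-j_\ell+1$, which converts each descending chain $n\ge j_1>\cdots>j_k>1$ into an ascending chain $1\le p_1<p_2<\cdots<p_k\le n-1$. Adopting the boundary conventions $p_0=0$ and $p_{k+1}=n$, each such chain encodes a unique refinement $J\preceq I$ of length $s=k+1$, and the corresponding product of $\Psi$-entries telescopes exactly to
\be
\Psi^J \;=\; \Psi_{i_1+\cdots+i_{p_1}}\,\Psi_{i_{p_1+1}+\cdots+i_{p_2}}\,\cdots\,\Psi_{i_{p_k+1}+\cdots+i_n}.
\ee
This reparametrization is a bijection between the index set of the quasideterminant expansion and the set of refinements $\{J : J \preceq I\}$, essentially by the very definition of reverse refinement order recalled in Section~\ref{sec-compart}.

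Finally I would track the scalars. Each $b_{j_\ell-1}^{-1}$ becomes $(n-p_\ell)^{-1}$, so the product of $b$-factors contributes $\prod_{\ell=1}^{s-1}(n-p_\ell)^{-1}$; combining with the overall $1/n$ (after dividing the expansion of $n M^I$ by $n$) and using $n-p_0=n$, one obtains the advertised denominator $\prod_{\ell=0}^{s-1}(n-p_\ell)=\prod_{\ell=0}^{s-1}(\ell(I)-p_\ell)$. The overall sign $(-1)^{n-1}$ from the prefactor in (\ref{def-primm}), combined with the chain sign from (\ref{def-quasidet}), reduces modulo 2 to $(-1)^{n-s}=(-1)^{\ell(I)-\ell(J)}$, matching (\ref{eq:pi-through-Psi}). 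The main step is purely this indexing bijection and the sign/denominator bookkeeping; there is no substantive obstacle beyond checking that the map $(j_1,\ldots,j_k)\leftrightarrow(p_1,\ldots,p_{s-1})$ exhausts all refinements $J\preceq I$ of length $s$, which is immediate.
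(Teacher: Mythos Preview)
Your approach is exactly what the paper does: immediately after the statement the paper simply says ``This expression follows from the definition of a quasideterminant (\ref{def-quasidet}),'' and you have spelled out that one-line remark by exhibiting the bijection between descending chains $n\ge j_1>\cdots>j_k>1$ and coarsenings $J\preceq I$ via $p_\ell=n-j_\ell+1$, together with the sign and denominator bookkeeping. There is nothing to add; your argument is the intended one, carried out in full.
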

This expression follows from the definition of a quasideterminant (\ref{def-quasidet}).
\begin{example}
\[
M^{312} = \frac1{3}
\begin{vmatrix}
\Psi_2 & 1 & 0 \\
 \Psi_3 & \Psi_1 & 2 \\
\boxed{ \Psi_6 } & \Psi_4 & \Psi_3
\end{vmatrix} = \frac1{3} \left( \Psi_6 - \Psi_4 \Psi_2 - \frac1{2} \Psi_3^2 + \frac1{2} \Psi_3 \Psi_1 \Psi_2 \right)
\]
\end{example}
Also define \textbf{noncommutative forgotten symmetric function} corresponding to a composition
$ I = (i_1, \ldots, i_n)$ as an $ n $ by $ n $ quasideterminant:
\begin{definition}
\begin{equation}
\label{def-primf}
 n F^I \equiv n F^{(i_1, \ldots, i_n)} =
\begin{vmatrix}
\Psi_{i_n} & -( n - 1) & 0 & \dots & 0& 0\\
\Psi_{ i_{n -1} + i_n} & \Psi_{i_{n - 1}} & -( n - 2) & \ldots & 0 & 0 \\
\vdots &  \vdots & \vdots & \vdots &  \vdots & \vdots \\
\Psi_{i_2 + \ldots + i_n} & \ldots & \ldots & \ldots&  \Psi_{i_2}& - 1 \\
\fbox{$\Psi_{i_1 + \ldots + i_n}$} & \ldots & \ldots & \ldots &\Psi_{i_1 + i_2}& \Psi_{i_1}
\end{vmatrix} \\
\end{equation}
\end{definition}
In particular
\[
F^{1^n} = S_n
\]
where $ S_n $ a homogeneous symmetric function (\ref{def-complete}). \\
Equivalently,
\begin{state}
\begin{equation}
\label{eq:pi-through-Psi}
 F^I = \sum_{J \preceq I } \frac1{ \prod_{k=1}^{s} p_k} \Psi^J,
\end{equation}
where $ s = \ell(J)$.
\end{state}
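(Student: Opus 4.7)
The strategy is to apply the polynomial quasideterminant expansion (\ref{def-quasidet}) directly to the matrix (\ref{def-primf}) defining $F^I$ and then to reindex the resulting sum by refinements of $I$. Structurally the argument is completely parallel to the proof of the preceding proposition for $M^I$; the only differences are that the superdiagonal constants are now $b_j = -(n-j)$ instead of $b_j = j$, and that the defining quasideterminant carries no overall $(-1)^{n-1}$ prefactor.

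First I would read off the below-diagonal entries of the matrix: at position $(r,c)$ with $1\le c\le r\le n$ one has $a_{r,c} = \Psi_{i_{n-r+1}+i_{n-r+2}+\cdots+i_{n-c+1}}$. Substituting these entries together with $b_j = -(n-j)$ into (\ref{def-quasidet}), each nontrivial term of $nF^I$ becomes a scalar multiple of a product of the form $a_{n,j_1}\,a_{j_1-1,j_2}\cdots a_{j_l-1,1}$ indexed by a strictly decreasing sequence $n \ge j_1 > j_2 > \cdots > j_l > 1$.

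I would then change variables by setting $p_r := n - j_r + 1$ for $r = 1,\ldots,l$, with the boundary conventions $p_0 := 0$ and $p_s := n$ where $s := l+1$. A direct inspection shows that consecutive subscripts in the product of matrix entries telescope into consecutive blocks of parts of $I$, so that $a_{n,j_1}\,a_{j_1-1,j_2}\cdots a_{j_l-1,1}$ equals $\Psi_{i_1+\cdots+i_{p_1}}\,\Psi_{i_{p_1+1}+\cdots+i_{p_2}}\cdots\Psi_{i_{p_{s-1}+1}+\cdots+i_n} = \Psi^J$, and that the sequences $(j_1,\ldots,j_l)$ are in bijection with the refinements $J \preceq I$ having $\ell(J) = s$ parts.

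Finally I would track the scalar coefficient. Each factor $b_{j_r-1}^{-1} = -1/p_r$ supplies a sign $-1$ and a denominator $p_r$; the product of the $l = s-1$ such factors combines with the sign coming from the quasideterminant expansion so that all minus signs cancel, leaving $\prod_{r=1}^{s-1} p_r^{-1}$ (which is why the claimed formula has no alternating sign, in contrast with the $M^I$ proposition). Dividing by the prefactor $n = p_s$ on the left-hand side of $nF^I = \cdots$ supplies the missing factor $1/p_s$, so the coefficient of $\Psi^J$ is exactly $1/\prod_{r=1}^{s} p_r$ as asserted. The main obstacle is purely combinatorial bookkeeping: one must verify that the assignment $(j_1,\ldots,j_l) \mapsto (p_1,\ldots,p_l)$ is indeed a bijection onto the set of refinements $J \preceq I$ with $\ell(J) = s$, and that the telescoping of subscripts in the product of $a$-entries faithfully reconstructs $\Psi^J$. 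No conceptual novelty beyond what was already used for $M^I$ is required.
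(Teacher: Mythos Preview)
Your proposal is correct and follows exactly the approach the paper indicates: the paper treats this proposition as an immediate consequence of the polynomial expansion (\ref{def-quasidet}) applied to the defining quasideterminant (\ref{def-primf}), without spelling out the reindexing, and you have supplied precisely that bookkeeping. The only caveat is that the sign convention in (\ref{def-quasidet}) as printed is slightly ambiguous (the single-term case $a_{n1}$ is not explicitly displayed), so your sign-cancellation sentence should be read with that in mind; the outcome you reach is the correct one, as the example $F^{213}$ confirms.
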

\begin{example}
\[
F^{213} = \frac1{3}
\begin{vmatrix}
\Psi_3 & -2 & 0 \\
\Psi_4 & \Psi_1 & -1 \\
\boxed{ \Psi_6} & \Psi_3 & \Psi_2
\end{vmatrix} = \frac1{3} \left( \Psi_6 + \frac1{2}\Psi_3^2 + \Psi_2 \Psi_4 + \frac1{2} \Psi_2 \Psi_1 \Psi_3 \right)
\]
\end{example}
As noted above (see (\ref{def-quasidet}))
it follows from Proposition 4.7 of \cite{GR2} that all $ M^I $ and $ F^I $ are polynomial in power sums.

Notice that if involution $ \omega $ as defined in (\ref{eq-invol}) is extended
(compare Prop. 3.9 in \cite{GL})
\[
\omega(\Psi^I) = (-1)^{|I|-\ell(I)} \Psi^{\bar{I}}
\]
then
\begin{equation}
\label{duality-pi}
\omega(M^I) = (-1)^{|I| - \ell(I)} F^{\bar{I}},
\end{equation}
where $ \bar{I} $ is the reversed composition.
\subsection{Recursion Relations, Pieri formulas, and Multiplicative Structure.}
The definition of the primitive monomial functions
(\ref{def-primm}) implies a linear relationship between primitive monomial functions
obtained from the same composition due to Theorem 1.8 of \cite{GR1}.

This relationship can be stated as follows (and may be considered a generalization of formulas (31) in
Proposition 3.3 in \cite{GL}):
\begin{state}{Newton-type relations.}
\begin{align}
\label{eq-linear}
& n M^{i_1, \ldots, i_n} = \Psi_{i_1}M^{i_2, \ldots , i_n} - \Psi_{i_1 + i_2}M^{i_3, \ldots, i_n}
+ \ldots + (-1)^{s - 1} \Psi_{i_1 + \ldots + i_s} M^{i_{s +1},  \ldots, i_n} + \ldots +
(-1)^{n -1} \Psi_{i_1 + \ldots + i_n} \\
\label{eq-linear:forgotten}
& n F^{i_1, \ldots, i_n} = F^{i_1, \ldots , i_{n-1}} \Psi_{i_n} + F^{i_1, \ldots, i_{n-2}}\Psi_{i_{n-1} + i_n}
+ \ldots + F^{i_1 + \ldots + i_s} \Psi_{i_{s +1},  \ldots, i_n} + \ldots +
 \Psi_{i_1 + \ldots + i_n}
\end{align}
\end{state}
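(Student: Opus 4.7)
The plan is to derive both identities from the quasideterminant definitions (\ref{def-primm}) and (\ref{def-primf}) via the row-expansion of Theorem~1.8 of \cite{GR1}, paralleling the derivation of formulas (31) in Proposition~3.3 of \cite{GL}. The crucial structural observation is that the top-left $(n-s)\times(n-s)$ submatrix of the matrix defining $nM^{(i_1,\ldots,i_n)}$ is, on the nose, the matrix defining $(n-s)M^{(i_{s+1},\ldots,i_n)}$: the first column, diagonal, and bottom row all re-express correctly when the indices are shifted from $i_1,\ldots,i_n$ to $i_{s+1},\ldots,i_n$, and the superdiagonal $1,2,\ldots,n-s-1$ is the truncation of $1,2,\ldots,n-1$. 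An entirely analogous statement holds for the bottom-right $s\times s$ submatrix of the matrix defining $nF^{(i_1,\ldots,i_n)}$: it coincides with the matrix defining $sF^{(i_1,\ldots,i_s)}$.

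For (\ref{eq-linear}) I would then apply the quasideterminant expansion (\ref{def-quasidet}), grouping the terms by the column $j_1=n-s+1$ at which the summation path first leaves the boxed bottom-left corner $(n,1)$. The bottom-row entry at this column is $\Psi_{i_1+\ldots+i_s}$, and the superdiagonal factor $b_{n-s}^{-1}=1/(n-s)$ is absorbed by the normalization $(n-s)$ of the inner quasideterminant, which by the observation above equals $(-1)^{(n-s)-1}(n-s)M^{(i_{s+1},\ldots,i_n)}$. Combining this with the outer sign $(-1)^{n-1}$ from (\ref{def-primm}) yields exactly $(-1)^{s-1}\Psi_{i_1+\ldots+i_s}M^{(i_{s+1},\ldots,i_n)}$ for each $s=1,\ldots,n-1$, plus the residual corner term $(-1)^{n-1}\Psi_{i_1+\ldots+i_n}$ coming from $s=n$, which is precisely (\ref{eq-linear}).

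The derivation of (\ref{eq-linear:forgotten}) is the mirror image: the path now leaves the boxed corner upward along the first column, arriving at row $n-s$ whose first-column entry is $\Psi_{i_{s+1}+\ldots+i_n}$, and it then couples with the quasideterminant of the bottom-right $s\times s$ block, which is $sF^{(i_1,\ldots,i_s)}$. The minus signs carried by the superdiagonal entries $-(n-1),\ldots,-1$ cancel against the signs in the Gelfand--Retakh expansion, leaving every term on the right-hand side with a positive coefficient, consistent with the nonnegativity visible in (\ref{eq:pi-through-Psi}). The main obstacle is simply bookkeeping: tracking the signs and the telescoping normalizations $(n-s)\cdot b_{n-s}^{-1}=1$ (respectively the factor $s$ from the definition of $sF^{(i_1,\ldots,i_s)}$) as the recursion is assembled. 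This can be verified efficiently, if one prefers to bypass the quasideterminant manipulation altogether, by comparing coefficients of a fixed monomial $\Psi^J$ on the two sides using (\ref{eq:pi-through-Psi}): for $J\preceq I$ with pivots $(p_0,p_1,\ldots,p_t)$ only the term with $s=p_1$ (respectively $s=p_{t-1}$) can contribute, and a short computation with the shifted pivots $p'_k=p_{k+1}-p_1$ inside the subcomposition of length $n-p_1$ matches the coefficient $(-1)^{n-t}/\prod_{k=1}^{t-1}(n-p_k)$ (respectively $1/\prod_{k=1}^{t-1}p_k$) on the nose.
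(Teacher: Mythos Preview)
Your proposal is correct and follows exactly the route indicated in the paper: the paper does not give a detailed proof of this proposition but simply asserts that it follows from the quasideterminant definitions (\ref{def-primm}), (\ref{def-primf}) via Theorem~1.8 of \cite{GR1}, as a generalization of formulas (31) in Proposition~3.3 of \cite{GL}. Your write-up fills in precisely those details (the identification of the relevant submatrix with the quasideterminant for the shorter composition, and the sign/normalization bookkeeping), and your alternative coefficient-matching check via (\ref{eq:pi-through-Psi}) is a nice independent verification that the paper does not mention.
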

Moreover, Newton-type relationships above imply rules of multiplication by a power sum  of an arbitrary primitive
monomial function on the left and forgotten on the right:
\begin{lemma}{Pieri-like formula for primitive monomials and forgotten.}

Let $ I $ be a composition with $ \ell(I) = n $, then
\begin{align}
\label{eq-pieri}
& \Psi_r \cdot M^{I} = (n + 1)M^{ (r) \cdot I} + n \ M^{ (r) \triangleright I} \\
\label{eq-pieri:forgotten}
& F^{I} \cdot \Psi_r = (n+1) F^{ I \cdot (r)} - n \ F^{ I \triangleright (r)}
\end{align}
\end{lemma}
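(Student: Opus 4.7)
The plan is to derive both Pieri-type formulas directly from the Newton-type relations \eqref{eq-linear} and \eqref{eq-linear:forgotten} of the preceding proposition, by a straightforward telescoping cancellation. No additional ingredient is required beyond careful bookkeeping of signs and lengths.

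For the monomial identity \eqref{eq-pieri}, I would apply \eqref{eq-linear} to the composition $(r)\cdot I=(r,i_1,\ldots,i_n)$, of length $n+1$, to obtain
\[
(n+1)M^{(r)\cdot I}=\Psi_r M^I-\Psi_{r+i_1}M^{(i_2,\ldots,i_n)}+\Psi_{r+i_1+i_2}M^{(i_3,\ldots,i_n)}-\cdots ,
\]
and apply it separately to $(r)\triangleright I=(r+i_1,i_2,\ldots,i_n)$, of length $n$, to obtain
\[
n\,M^{(r)\triangleright I}=\Psi_{r+i_1}M^{(i_2,\ldots,i_n)}-\Psi_{r+i_1+i_2}M^{(i_3,\ldots,i_n)}+\cdots .
\]
The two right-hand sides share the same sequence of ``tail'' terms $\Psi_{r+i_1+\cdots+i_s}M^{(i_{s+1},\ldots,i_n)}$ for $s\ge 1$, but with opposite signs: the $s$-th such term carries $(-1)^{s}$ in the first expansion and $(-1)^{s-1}$ in the second. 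Adding the two identities therefore annihilates every tail term and leaves only $\Psi_r M^I$ on the right, which is precisely the claim.

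For the forgotten identity \eqref{eq-pieri:forgotten} the same strategy works, with one small twist: the Newton-type relation \eqref{eq-linear:forgotten} has all positive signs. Expanding $(n+1)F^{I\cdot(r)}$ and $n\,F^{I\triangleright(r)}$ by \eqref{eq-linear:forgotten} produces the tail terms $F^{(i_1,\ldots,i_{n-k})}\Psi_{i_{n-k+1}+\cdots+i_n+r}$ for $k\ge 1$ with the \emph{same} sign in each expansion. Consequently one must \emph{subtract} rather than add to achieve cancellation, and the relative minus sign in the statement is exactly what this sign pattern produces. What survives the cancellation is the single term $F^I\Psi_r$, yielding \eqref{eq-pieri:forgotten}.

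The argument is essentially routine telescoping; the only place that requires attention is the sign bookkeeping, namely checking that the alternating signs of the $M$-recursion turn the cancellation into an \emph{addition} while the uniform positive signs of the $F$-recursion force a \emph{subtraction}. Once the two Newton-type expansions are aligned term-by-term, both Pieri-like formulas drop out immediately.
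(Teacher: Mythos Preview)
Your proof is correct and follows essentially the same approach as the paper: write the Newton-type relation \eqref{eq-linear} for the two compositions $(r)\cdot I$ and $(r)\triangleright I$, add, and observe that all tail terms cancel; the paper then remarks that \eqref{eq-pieri:forgotten} can be obtained either by the analogous direct computation you carry out or by applying $\omega$ to \eqref{eq-pieri}.
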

\begin{proof}
Writing (\ref{eq-linear}) for two compositions $ (r) \cdot I \equiv (r, i_1, \ldots, i_n)$ and
$ (r) \triangleright I \equiv (r + i_1, i_2, \ldots, i_n) $ (of lengths $ n + 1$ and $ n$ correspondingly) one gets:
\begin{align*}
&  (n + 1)M^{(r) \cdot I} = \Psi_{r}M^{(i_1, \ldots , i_n)} - \Psi_{r + i_1}M^{(i_2, \ldots, i_n)}
 + \ldots + (-1)^{n + 1} \Psi_{r + i_1 + \ldots + i_n} \\
& n \ M^{(r) \triangleright I} =  \Psi_{r + i_1}M^{(i_2, \ldots, i_n)}
+  \ldots + (-1)^{n} \Psi_{r +i_1 + \ldots + i_n}
\end{align*}
Formula (\ref{eq-pieri}) follows by adding these expressions. \\
Equation (\ref{eq-pieri:forgotten}) can be seen as a result of application of involution to
(\ref{eq-pieri}) or proved exactly the same way as (\ref{eq-pieri}).
\end{proof}
More generally, the product of two monomial symmetric functions has the following expansion:
\begin{state}
\begin{equation}
\label{eq-pimultiply}
M^{I} \cdot M^{J} = \sum_{K \preceq I} \binom{\ell(K) + \ell(J) }{\ell(I)} M^{ K \cdot J} +
\binom{\ell(K) + \ell(J) - 1}{\ell(I)} M^{ K \triangleright J},
\end{equation}
where the sum is over all compositions preceding $ J $ in the reverse refinement order.
\end{state}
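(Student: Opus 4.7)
The plan is to induct on $n = \ell(I)$. The base case $n = 1$ follows directly from the Pieri formula (\ref{eq-pieri}): with $I = (r)$ one has $M^I = \Psi_r$, the unique $K \preceq I$ is $K = I$, and the binomial coefficients $\binom{\ell(J)+1}{1} = \ell(J)+1$ and $\binom{\ell(J)}{1} = \ell(J)$ reproduce the Pieri coefficients exactly.

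For the inductive step I would apply the Newton-type recursion (\ref{eq-linear}) to the left factor,
\[
n\, M^I \cdot M^J \;=\; \sum_{s=1}^{n}(-1)^{s-1}\, \Psi_{i_1+\ldots+i_s}\cdot M^{(i_{s+1},\ldots,i_n)}\cdot M^J,
\]
expand each $M^{(i_{s+1},\ldots,i_n)}\cdot M^J$ using the inductive hypothesis (the first factor has length $n-s<n$), and then use the left Pieri formula (\ref{eq-pieri}) to absorb $\Psi_{i_1+\ldots+i_s}$. The associativity-style identities
\[
(r)\cdot K'\cdot J = ((r)\cdot K')\cdot J, \qquad (r)\triangleright(K'\cdot J) = ((r)\triangleright K')\cdot J,
\]
\[
(r)\cdot(K'\triangleright J) = ((r)\cdot K')\triangleright J, \qquad (r)\triangleright(K'\triangleright J) = ((r)\triangleright K')\triangleright J
\]
(valid whenever $K'$ is nonempty) then ensure that every output composition has the form $K\cdot J$ or $K\triangleright J$ with $K \preceq I$. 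Fixing such a $K$ with $m = \ell(K)$ parts and first merging boundary at position $p$ (so that $k_1 = i_1+\ldots+i_p$), the contributions to $M^{K\cdot J}$ come from $s = p$ paired with the ``$\cdot$'' branch of Pieri (writing $K = (r)\cdot K'$ with $\ell(K') = m-1$) together with $s \in \{1,\ldots,p-1\}$ paired with the ``$\triangleright$'' branch (writing $K = (r)\triangleright K'$ with $\ell(K') = m$); the contributions to $M^{K\triangleright J}$ are classified in parallel fashion, with the boundary case $K'=\emptyset$ accounting for the $m = 1$ term.

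The main obstacle will be the combinatorial identity that aggregates these contributions. Setting $a = m + \ell(J)$, the coefficient of $M^{K\cdot J}$, after dividing by $n$, reduces to
\[
\frac{a}{n}\,\Bigl[\,\sum_{s=1}^{p-1}(-1)^{s-1}\binom{a}{n-s} \;+\; (-1)^{p-1}\binom{a-1}{n-p}\,\Bigr] \;=\; \binom{a}{n},
\]
and the coefficient of $M^{K\triangleright J}$ reduces to the same identity with $a$ replaced by $a-1$. Both follow from iterated Pascal telescoping: the trailing $\binom{a-1}{n-p}$ is exactly what is needed to collapse one step of the alternating sum, so the bracket simplifies down to $\binom{a-1}{n-1}$, after which $a\binom{a-1}{n-1} = n\binom{a}{n}$ closes the computation. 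The real difficulty is therefore not the identity itself but the preceding bookkeeping: enumerating, for each $K \preceq I$, exactly which triples $(s, K', \text{Pieri branch})$ contribute and checking that nothing else does.
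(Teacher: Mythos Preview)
Your argument is correct and is essentially a fully worked-out version of the paper's two-line sketch, which also proceeds by induction with the Pieri formula (\ref{eq-pieri}) as the base case and an appeal to Lemma~4.1 of \cite{GL} for the inductive step. The paper writes ``induction on the number of parts of the composition $J$'', but since the stated initialization is the \emph{left} Pieri rule $\Psi_r\cdot M^J$, your choice to induct on $\ell(I)$ is the natural reading; your use of the Newton relation (\ref{eq-linear}) to peel off the front of $M^I$, re-absorb each $\Psi_{i_1+\cdots+i_s}$ via Pieri, and close with the Pascal-telescoping identity
\[
\sum_{s=1}^{p-1}(-1)^{s-1}\binom{a}{n-s}+(-1)^{p-1}\binom{a-1}{n-p}=\binom{a-1}{n-1}
\]
is exactly how one makes that sketch precise. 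The only point worth tightening in a write-up is the boundary $s=n$, where $M^{(i_{s+1},\ldots,i_n)}$ degenerates to $1$ and Pieri is applied directly to $\Psi_{|I|}\cdot M^J$ rather than through the inductive hypothesis; you flag this, and it folds into the same identity with $p=n$, so there is no gap.
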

\begin{proof}
The proof of equation (\ref{eq-pimultiply}) proceeds by induction on the number of parts of the composition $ J $, Lemma (\ref{eq-pieri})
being the initialization and uses Lemma 4.1 in \cite{GL}.
\end{proof}
\begin{state}
\begin{equation}
\label{eq-phimultiply}
F^{I} \cdot F^{J} = \sum_{ K \preceq J } (-1)^{\ell(K) - \ell(J) + 1} \binom{ \ell(I) + \ell(K)}{\ell(J)} F^{I \cdot K} +
\sum_{ K \preceq J } (-1)^{\ell(K) - \ell(J)} \binom{ \ell(I) + \ell(K) - 1}{\ell(J)} F^{I \triangleright K}
\end{equation}
\end{state}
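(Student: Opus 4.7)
The plan is to deduce (\ref{eq-phimultiply}) from the already-established monomial product formula (\ref{eq-pimultiply}) by applying the involution $\omega$. The key input is that $\omega$ is an \emph{anti-}homomorphism of $\mathbf{NSym}$: this is built into its extension $\omega(\Psi^I)=(-1)^{|I|-\ell(I)}\Psi^{\bar I}$, which reverses the order of the power-sum factors. Combining this with (\ref{duality-pi}) gives
\[
\omega(M^I\cdot M^J)=\omega(M^J)\,\omega(M^I)=(-1)^{|I|+|J|-\ell(I)-\ell(J)}\,F^{\bar J}\cdot F^{\bar I}.
\]

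Next, I would apply $\omega$ term by term to the right-hand side of (\ref{eq-pimultiply}). Using (\ref{duality-pi}) together with the identities $\overline{K\cdot J}=\bar J\cdot\bar K$, $\overline{K\triangleright J}=\bar J\triangleright\bar K$, $\ell(K\cdot J)=\ell(K)+\ell(J)$, and $\ell(K\triangleright J)=\ell(K)+\ell(J)-1$, each summand on the right becomes a scalar multiple of $F^{\bar J\cdot\bar K}$ or $F^{\bar J\triangleright\bar K}$. Since $K\preceq I$ forces $|K|=|I|$, the powers of $-1$ collapse into expressions depending only on $\ell(K)$, $\ell(I)$, $\ell(J)$; dividing out the common factor $(-1)^{|I|+|J|-\ell(J)}$ from both sides and relabeling $(I,J)\mapsto(\bar J,\bar I)$ together with $K\mapsto\bar K$ (noting $K\preceq I\Leftrightarrow\bar K\preceq\bar I$ and that reversal preserves length) yields an identity of precisely the shape claimed in (\ref{eq-phimultiply}).

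As a sanity check, or as an alternative self-contained route, one may mimic the proof of (\ref{eq-pimultiply}) directly: induct on $\ell(J)$, with the base case $\ell(J)=1$ supplied by the Pieri-type relation (\ref{eq-pieri:forgotten}), and the inductive step folding two successive Pieri applications together via the same Vandermonde-type binomial manipulation (Lemma 4.1 of \cite{GL}) that was used for $M$. This inductive approach also affords a very clean verification of the $\ell(J)=1$ case, which reduces to (\ref{eq-pieri:forgotten}) itself.

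The main obstacle is careful bookkeeping of the signs $(-1)^{\ell(K)-\ell(J)}$ and $(-1)^{\ell(K)-\ell(J)+1}$ and ensuring that the binomial coefficients $\binom{\ell(I)+\ell(K)}{\ell(J)}$ and $\binom{\ell(I)+\ell(K)-1}{\ell(J)}$ attach to the correct summand ($F^{I\cdot K}$ versus $F^{I\triangleright K}$) after the role-swap of $I$ and $J$ induced by the anti-involution. Pinning these down by cross-checking the $\ell(J)=1$ specialization against (\ref{eq-pieri:forgotten}), and the $\ell(I)=0$ (empty composition) edge case, is the cheapest way to fix all signs and placements unambiguously.
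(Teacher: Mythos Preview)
Your approach is exactly the paper's: its entire proof reads ``The formula follows by applying $\omega$ to the formula (\ref{eq-pimultiply}),'' and you have simply spelled out the mechanics (anti-homomorphism, reversal identities, relabeling) that this one-liner presupposes. Your suggested sanity check against (\ref{eq-pieri:forgotten}) at $\ell(J)=1$ is worthwhile---carrying it out shows the two sign exponents in the stated formula are swapped, a typo your derivation would correct.
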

\begin{proof}
The formula follows by applying $ \omega $ to the formula (\ref{eq-pimultiply}).
\end{proof}
\subsection{Fundamental Noncommutative Symmetric Functions.}
\label{sec-fundamental}
In this section I define the noncommutative analog of Gessel's \cite{G} fundamental symmetric function.
More precisely,  a \textbf{fundamental} noncommutative symmetric function is
\begin{definition}
\begin{equation}
\label{def-fundamental}
L^I = \sum_{J \succeq I} M^J
\end{equation}
\end{definition}
Conversely
\begin{equation}
\label{eq:monomial-through-fundamental}
M^I = \sum_{J \succeq I} (-1)^{\ell(J) - \ell(I)} L^J
\end{equation}
Just like its quasi-symmetric prototype, this function behaves nicely under the involution $ \omega $:
\begin{state}
\label{prop-duality-fundamental}
\[
\omega \left( L^I \right) =  L^{\widetilde{I}}
\]
\end{state}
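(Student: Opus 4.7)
The plan is to unfold $\omega(L^I)$ term-by-term from the definition (\ref{def-fundamental}), re-express everything in the monomial basis $\{M^K\}$, and match the coefficients against those of $L^{\widetilde{I}}$. For bookkeeping it is very convenient to index each composition $J$ of $n := |I|$ by its descent set $A(J) \subseteq \{1,\ldots,n-1\}$ of partial sums of its parts: then $J \succeq I$ corresponds to $A(J) \supseteq A(I)$, $\ell(J) = |A(J)|+1$, reversal becomes $A(\bar{J}) = n-A(J)$, and, crucially, conjugation becomes $A(\widetilde{J}) = n - (\{1,\ldots,n-1\} \setminus A(J))$.

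First I would apply $\omega$ termwise to $L^I = \sum_{J \succeq I} M^J$, invoke (\ref{duality-pi}) to replace each $\omega(M^J)$ by $(-1)^{|J|-\ell(J)} F^{\bar J}$, and then re-expand using the triangular identity $F^H = \sum_{K \preceq H} M^K$ announced in Section~\ref{sec-results}. Translating both conditions $J \succeq I$ and $K \preceq \bar J$ through the descent-set dictionary and collecting the coefficient of each $M^K$ leads to
\begin{equation*}
\omega(L^I) \;=\; (-1)^{n-1} \sum_{K} M^K \sum_{A(J)\,\supseteq\, A(I)\,\cup\, A(\bar K)} (-1)^{|A(J)|}.
\end{equation*}

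The inner alternating sum is a standard Boolean-lattice computation: with $A(J) = (A(I) \cup A(\bar K)) \sqcup C$ and $C$ arbitrary in the complement of $A(I) \cup A(\bar K)$ inside $\{1,\ldots,n-1\}$, the identity $\sum_C (-1)^{|C|} = 0$ unless that complement is empty forces the sum to vanish unless $A(I) \cup A(\bar K) = \{1,\ldots,n-1\}$, in which case it equals $(-1)^{n-1}$. The two signs then cancel and each surviving $M^K$ appears with coefficient $+1$.

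It remains to translate the surviving condition back into the order on compositions. $A(I) \cup A(\bar K) = \{1,\ldots,n-1\}$ is equivalent to $A(\bar K) \supseteq \{1,\ldots,n-1\} \setminus A(I)$, which via $A(\bar K) = n - A(K)$ becomes $A(K) \supseteq A(\widetilde{I})$, i.e.\ $K \succeq \widetilde{I}$. Hence $\omega(L^I) = \sum_{K \succeq \widetilde{I}} M^K = L^{\widetilde{I}}$, as desired. The main subtlety is precisely this last translation: one has to keep reversal and complementation carefully straight so that conjugation is correctly identified with the "reverse complement" of the descent set. Once that dictionary is in place, the whole argument reduces to the one inclusion-exclusion identity above.
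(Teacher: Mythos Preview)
Your argument is correct. Expanding $\omega(L^I)$ via (\ref{duality-pi}) and then (\ref{eq-triang}), passing to descent sets, and collapsing the alternating sum by inclusion--exclusion does exactly what you claim; the identification $A(\widetilde{I}) = n - (\{1,\ldots,n-1\}\setminus A(I))$ checks out against the paper's definition of conjugation (as in Example~1), and the final rewriting $A(\bar K)\supseteq \{1,\ldots,n-1\}\setminus A(I)\iff A(K)\supseteq A(\widetilde{I})$ is clean.

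This is, however, a genuinely different route from the paper's. The paper does not unfold $L^I$ at all: it first sets up the scalar product $\langle M^I\mid S^J\rangle=\delta_{IJ}$, proves that $\langle L^I\mid R^J\rangle=\delta_{IJ}$ (Proposition~\ref{prop:duality-fundamental-ribbon}), and that $\omega$ is an isometry (Proposition~\ref{prop-isometry}). Proposition~\ref{prop-duality-fundamental} then follows in two lines by writing $\omega(L^I)=\sum_K c_{IK}L^K$ and pairing against $R^{\widetilde{J}}=\omega(R^J)$ to read off $c_{IK}=\delta_{K,\widetilde{I}}$. Your approach is more elementary and self-contained: it needs only (\ref{duality-pi}) and (\ref{eq-triang}), and in particular avoids the Cauchy identity and the whole scalar-product apparatus. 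The paper's approach, by contrast, is slicker once that apparatus is in place and makes the result a formal consequence of $\omega(R^I)=R^{\widetilde{I}}$ together with duality; it also explains structurally \emph{why} $L^I$ should transform the same way $R^I$ does. Both proofs ultimately lean on (\ref{eq-triang}), since the isometry proof uses it too.
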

The proof of this fact will be given in Section \ref{sec-cauchy}.
\subsection{Multiplication of Fundamental Noncommutative Symmetric Functions.}
It is natural to inquire about multiplication rule for noncommutative fundamental symmetric functions. \\
In fact their product is non-negative (in the
same basis) and given by (I would like to thank to Jean-Christophe Novelli for pointing out this formula.):
\begin{state}
\[
L^I \cdot L^J = \sum_{K \preceq I, \ M \succeq J} \binom{ |I| + \ell(J) - \ell(I)}{\ell(K) + \ell(M) - \ell(I)} L^{ K \cdot M}
+ \binom{ |I| + \ell(J) - \ell(I)}{\ell(K) + \ell(M) -1 - \ell(I)} L^{ K \triangleright M}
\]
\end{state}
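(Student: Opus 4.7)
The plan is to expand both factors into the monomial basis via $L^I=\sum_{I'\succeq I}M^{I'}$, apply the monomial product rule (\ref{eq-pimultiply}) to each resulting product $M^{I'}M^{J'}$, and then convert back using the inversion $M^N=\sum_{N'\succeq N}(-1)^{\ell(N')-\ell(N)}L^{N'}$ from (\ref{eq:monomial-through-fundamental}). The structural observation that makes this tractable is that (\ref{eq-pimultiply}) produces only compositions of the form $K\cdot J'$ or $K\triangleright J'$ with $K\preceq I'$, so every composition that appears on the right is a concatenation (or glued concatenation) of a coarsening of $I$ with a refinement of $J$. This is exactly the index set of the claimed formula.

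First I would write
\[
L^{I}L^{J}=\sum_{\substack{I'\succeq I,\,J'\succeq J\\K\preceq I'}}\left[\binom{\ell(K)+\ell(J')}{\ell(I')}M^{K\cdot J'}+\binom{\ell(K)+\ell(J')-1}{\ell(I')}M^{K\triangleright J'}\right],
\]
group the terms by the target monomial composition $N=K\cdot M$ (or $N=K\triangleright M$) with $K\preceq I$ and $M\succeq J$, and apply the M\"obius-type inversion to pass from $M^N$ to the $L$-basis. Reindexing the resulting multiple sum by fixing $(K,M)$ leaves a weighted sum over the intermediate refinements $I'$ and $J'$ and the intermediate coarsening $N'$ produced by the inversion; it is this combinatorial sum whose value must equal $\binom{|I|+\ell(J)-\ell(I)}{\ell(K)+\ell(M)-\ell(I)}$, or the shifted version for the $\triangleright$ term.

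The main obstacle is proving that collapse. Since the number of refinements of $I$ of length $\ell(I)+a$ equals $\binom{|I|-\ell(I)}{a}$, and analogous binomials count the intermediate refinements on the $J$-side and the intermediate coarsenings of $N$ appearing in the inversion, I expect the identity to reduce to a single Vandermonde--Chu convolution: the upper entry $|I|+\ell(J)-\ell(I)=(|I|-\ell(I))+\ell(J)$ is exactly the total slack available in the $I$-refinements plus the starting length of $J$, which is precisely the shape produced by such a convolution. The alternating signs coming from the $M\to L$ inversion should telescope against the signs encoded by refining inside a fixed coarsening interval, leaving a positive binomial.

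If the direct bookkeeping becomes unwieldy, I would pursue in parallel an induction on $\ell(J)$. The base case $\ell(J)=1$, i.e.\ a Pieri-type rule for $L^{I}\cdot L^{(r)}$, can be extracted directly from (\ref{eq-pimultiply}) applied to the monomial expansion of $L^I$, and the inductive step would split $L^J$ using a Newton-type recursion obtained from (\ref{eq-linear}) via the definition (\ref{def-fundamental}). Either route reduces the proposition to the single binomial identity above, which is the heart of the argument.
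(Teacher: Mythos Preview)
Your approach is essentially the paper's: expand into monomials, apply the product rule (\ref{eq-pimultiply}), invert back to the $L$-basis, and collapse the resulting sums via binomial identities. The paper organizes the computation by first closing up $L^I\cdot M^V$ and only afterwards summing over $V\succeq J$; it also isolates the two identities that do all the work, namely (\ref{eq:identity1}) (a Vandermonde over refinements of a fixed composition) and (\ref{eq:identity2}) (an alternating sum over an interval $[J,S]$ in refinement order), each applied once per stage. This staging is worth adopting: your simultaneous expansion of both factors leads to a quadruple sum whose collapse requires the same two identities anyway, but interleaved.

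One point to correct in your write-up: the ``structural observation'' is premature. In the monomial expansion the left factor is $K\preceq I'$ for some refinement $I'\succeq I$, and such a $K$ need \emph{not} be a coarsening of $I$ (e.g.\ $I=(2,2)$, $I'=(1,1,2)$, $K=(1,3)$). The restriction to $K\preceq I$ in the final formula does not come from the shape of the monomial product; it emerges only after the alternating signs from the $M\to L$ inversion are summed, and that is exactly what identity (\ref{eq:identity2}) encodes. So the index set is a consequence of the cancellation, not an input to it.
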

In principle, the proof relies on the direct application of the multiplication rule of monomials. \\
It will be important to notice that if $ \{S\} \succeq K $ (and $ \{T\} \succeq V $)
is a set of compositions no less then $ K $ ($V$ respectively), then all compositions no less then $ K \cdot V $ are of the
form $ S \cdot T $; and all compositions no less then $ K \triangleright V $ are either of the
form $ S \cdot T $ or $ S \triangleright T $. That is
\begin{align}
\label{eq:larger-compositions}
& \{K \cdot V\} = \{ S \} \cdot \{ T \} \text{ and } \notag \\
& \{K \triangleright V \} = \{ S \} \triangleright \{T \} \cup \{ S \} \cdot \{T \},
\end{align}
here both multiplications are understood as element-wise operations. \\
In addition two identities will also prove useful,
\begin{lemma}
\begin{align}
\label{eq:identity1}
& \sum_{J: \ J \succeq I} \binom{X}{Y + \ell(J)} = \binom{X+ |I| - \ell(I)}{Y + |I| } \\
\label{eq:identity2}
& \sum_{M: \ S \succeq M \succeq J }(-1)^{ \ell(S) - \ell(M)} \binom{ X + \ell(M) }{Y} = \binom{X + \ell(J)}{Y - \ell(S) + \ell(J)}
\end{align}
\end{lemma}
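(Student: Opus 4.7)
The plan is to prove both identities by a common strategy: parametrize the refinement sums combinatorially via the standard bijection between compositions of $N$ and subsets of $\{1,\dots,N-1\}$ (under which $\preceq$ becomes reverse containment), and then collapse the resulting binomial expressions with Vandermonde's convolution and its finite-difference corollary. Throughout I use that the number of compositions of a positive integer $m$ with exactly $k$ parts is $\binom{m-1}{k-1}$, and that if $J\preceq S$ then the number of $M$ with $J\preceq M\preceq S$ and $\ell(M)=t$ equals $\binom{\ell(S)-\ell(J)}{t-\ell(J)}$, since in the subset picture we are choosing $t-\ell(J)$ extra cut points from the $\ell(S)-\ell(J)$ cuts of $S$ that are not already cuts of $J$.

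For identity~(\ref{eq:identity1}), I would write $I=(i_1,\dots,i_n)$ with $n=\ell(I)$ and parametrize each $J\succeq I$ by an independent choice, for every part $i_j$ of $I$, of a composition of $i_j$ into some $k_j\ge1$ parts. Since $\ell(J)=k_1+\dots+k_n$, the left hand side becomes
\begin{equation*}
\sum_{k_1,\dots,k_n\ge1}\;\prod_{j=1}^{n}\binom{i_j-1}{k_j-1}\,\binom{X}{Y+k_1+\dots+k_n}.
\end{equation*}
Grouping by $K=k_1+\dots+k_n$ and applying Vandermonde's convolution to $\prod_j\binom{i_j-1}{k_j-1}$ produces $\binom{|I|-\ell(I)}{K-\ell(I)}$, so the sum reduces to $\sum_{K}\binom{|I|-\ell(I)}{K-\ell(I)}\binom{X}{Y+K}$. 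A second application of Vandermonde, after rewriting $\binom{X}{Y+K}=\binom{X}{X-Y-K}$, yields $\binom{X+|I|-\ell(I)}{Y+|I|}$, which is the claimed right hand side.

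For identity~(\ref{eq:identity2}), the observation above lets me replace the sum over $M$ by a single sum over $t=\ell(M)$: setting $u=t-\ell(J)$ and $r=\ell(S)-\ell(J)$, the left hand side becomes
\begin{equation*}
\sum_{u=0}^{r}(-1)^{r-u}\binom{r}{u}\binom{X+\ell(J)+u}{Y}.
\end{equation*}
I would then expand $\binom{X+\ell(J)+u}{Y}=\sum_{v}\binom{u}{v}\binom{X+\ell(J)}{Y-v}$ by Vandermonde, interchange the two summations, and use $\binom{r}{u}\binom{u}{v}=\binom{r}{v}\binom{r-v}{u-v}$ together with $\sum_{u}(-1)^{r-u}\binom{r-v}{u-v}=\delta_{r,v}$ (the vanishing of the $r$-th finite difference of a polynomial of degree $<r$). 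Only the term $v=r$ survives, leaving $\binom{X+\ell(J)}{Y-r}=\binom{X+\ell(J)}{Y-\ell(S)+\ell(J)}$, as required.

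Everything reduces to Vandermonde's convolution; the one place where I expect to be most careful is tracking signs and index shifts in the finite-difference step of identity~(\ref{eq:identity2}), and checking that the range $t\in[\ell(J),\ell(S)]$ in the $M$-sum is handled correctly when translating between the compositions $M$ and the binomial parameter $u$.
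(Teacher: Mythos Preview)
Your argument is correct. Both identities are handled cleanly: the first by counting refinements of $I$ partwise and collapsing via two Vandermonde convolutions, the second by replacing the interval sum over $M$ with a sum over lengths using $\#\{M: J\preceq M\preceq S,\ \ell(M)=t\}=\binom{\ell(S)-\ell(J)}{t-\ell(J)}$ and then applying the standard finite-difference/Vandermonde trick so that only the top term $v=r$ survives. One cosmetic slip: under the usual bijection between compositions of $N$ and subsets of $\{1,\dots,N-1\}$, the order $\preceq$ corresponds to \emph{containment}, not reverse containment; this does not affect any step of your computation.

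As for comparison with the paper: there is nothing to compare against. The paper states this lemma but does not prove it; the \texttt{proof} environment immediately following the lemma is in fact the proof of the preceding proposition on the product $L^I\cdot L^J$, which \emph{uses} identities~(\ref{eq:identity1}) and~(\ref{eq:identity2}) at the indicated steps. Your write-up therefore supplies exactly what the paper omits, and could be inserted verbatim as the missing proof of the lemma.
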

\begin{proof}
I will break up the calculation in two parts. First I will calculate $ L^K \cdot M^V $ and then sum over all $ V \succeq J $.
\begin{align*}
&  L^I \cdot M^V = \sum_{W \succeq I} M^W \cdot M^V = \sum_{W \succeq I} \left( \sum_{ K \preceq W } \binom{ \ell(K) + \ell(V) }{\ell(W)}
M^{ K \cdot V } + \binom{ \ell(K) + \ell(V) -1}{\ell(W)} M^{ K \triangleright V } \right) = \\
& \stackrel{\text{ by } (\ref{eq:monomial-through-fundamental}) \text{ and } (\ref{eq:larger-compositions})}{=}
 \sum_{W \succeq I} \sum_{ K \preceq W } \binom{ \ell(K) + \ell(V) }{\ell(W)} \left(
\sum_{ S \succeq K, \ T \succeq V} (-1)^{\ell(S) + \ell(T) - \ell(K) - \ell(V)} L^{ S \cdot T} \right) + \\
& +  \binom{ \ell(K) + \ell(V) -1}{\ell(W)} \left(
\sum_{ S \succeq K, \ T \succeq V} (-1)^{\ell(S) + \ell(T) - \ell(K) - \ell(V)} \left( L^{ S \triangleright T} - L^{ S \cdot T} \right)\right) = \\
& =  \sum_{W \succeq I} \sum_{ K \preceq W } \binom{ \ell(K) + \ell(V)-1 }{\ell(W) -1} \left(
\sum_{ S \succeq K, \ T \succeq V} (-1)^{\ell(S) + \ell(T) - \ell(K) - \ell(V)} L^{ S \cdot T} \right) + \\
& +  \binom{ \ell(K) + \ell(V) -1}{\ell(W)} \left(
\sum_{ S \succeq K, \ T \succeq V} (-1)^{\ell(S) + \ell(T) - \ell(K) - \ell(V)} L^{ S \triangleright T} \right) = \\
& =  \sum_{S \preceq I, \ T \succeq V} (-1)^{ \ell(T)  - \ell(V)} \sum_{W  \succeq I } \sum_{ K \preceq S} (-1)^{\ell(S)  - \ell(K) }
\left[ \binom{\ell(K) + \ell(V) -1}{ \ell(W) -1} L^{ S \cdot T} +
\binom{\ell(K) + \ell(V) -1}{ \ell(W)} L^{ S \triangleright T} \right] \stackrel{\text{ by } (\ref{eq:identity1})}{=} \\
& = \sum_{S \preceq I, \ T \succeq V} (-1)^{ \ell(T)  - \ell(V)} \sum_{W  \succeq I }  \binom{ \ell(V) -1 + 1}{\ell(W) - \ell(S)}  L^{ S \cdot T}+
 \binom{ \ell(V) -1 + 1}{\ell(W) - \ell(S) +1} L^{ S \triangleright T} \stackrel{\text{ by } (\ref{eq:identity2})}{=} \\
& =  \sum_{S \preceq I, \ T \succeq V} (-1)^{ \ell(T)  - \ell(V)} \left[
\binom{ \ell(V) + |I| - \ell(I)}{|I| -\ell(S)}  L^{ S \cdot T}+ \binom{ \ell(V) + |I| - \ell(I)}{|I| - \ell(S) + 1}  L^{ S \triangleright T}
\right]
\end{align*}
Finally,
\begin{align*}
& L^I \cdot L^J = L^I \sum_{V \succeq J} M^V = \sum_{V \succeq J}
\sum_{S \preceq I, \ T \succeq V} (-1)^{ \ell(T)  - \ell(V)} \left[
\binom{ \ell(V) + |I| - \ell(I)}{|I| - \ell(S)}  L^{ S \cdot T}+ \binom{ \ell(V) + |I| - \ell(I)}{|I| - \ell(S) + 1}  L^{ S \triangleright T}
\right]= \\
& = \sum_{S \preceq I, \ T \succeq J} \sum_{ T \succeq V \succeq J}
(-1)^{ \ell(T)  - \ell(V)} \left[
\binom{ \ell(V) + |I| - \ell(I)}{|I| - \ell(S)}  L^{ S \cdot T}+ \binom{ \ell(V) + |I| - \ell(I)}{|I| - \ell(S) + 1}  L^{ S \triangleright T}
\right] \stackrel{\text{ by } (\ref{eq:identity2})}{=} \\
& = \sum_{S \preceq I, \ T \succeq J}
\binom{  |I| - \ell(I) + \ell(J) }{|I| - \ell(S) - \ell(T) + \ell(J)}  L^{ S \cdot T}+ \binom{  |I| - \ell(I) + \ell(J)}{|I| - \ell(S) + 1
- \ell(T) + \ell(J)} L^{ S \triangleright T} = \\
& = \sum_{S \preceq I, \ T \succeq J}
\binom{  |I| - \ell(I) + \ell(J) }{\ell(S) + \ell(T) - \ell(I)}  L^{ S \cdot T}+ \binom{  |I| - \ell(I) + \ell(J)}{ \ell(S)
+ \ell(T) - \ell(I) -1} L^{ S \triangleright T}
\qedhere
\end{align*}
\end{proof}
\newpage
\section{Transitions between Different Bases.}
\label{sec-transitions}
\subsection{Identity for Quasi-Determinants of Almost-Triangular Matrices and Transition Matrices between Forgotten and Monomial Bases.}
\label{sec-triang}
The property of triangularity follows from a general identity for quasideterminants to be
presented in a separate paper \cite{LT}.

Let an operator $ T_j $  act on $ Q_n (\mathbb{B}) $, see (\ref{def-quasidet}), by simultaneously removing
 $(j +1)^{\text{th}}$th column and $ j^{\text{th}}$ row
(the column and row that intersect at the off-diagonal element $ b_j $).
Fill the resulting $ (n -1) \times (n-1) $ matrix is filled in with the first $ ( n - 2) $ letters
of the alphabet $ \mathbb{B} $.
Further, for a sequence of distinct positive integers $ J =(j_1,j_2, \ldots, j_k) \subset \mathbb{N} $
define
\[
T_J = \prod_{s=1}^k T_{j_s}, \text{ and } \ell(J) = k
\]
Take $ \mathbb{B} = \mathbb{N} $. Then the following identity is true \cite{LT}.
\begin{state}
\label{pr-kaleid}
\begin{equation}
\label{eq-kaleid}
\frac1{n} Q_n(-(n-1), -(n-2), \ldots, -1)  =
\sum_{J}\frac{(-1)^{ n - k - 1}}{n - k} T_J Q_n(1,2,\ldots, n-1),
\end{equation}
where the sum is over all subsets $ J  \subseteq [ 1, 2, \dots, n - 1 ] $.
\end{state}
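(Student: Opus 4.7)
The plan is to establish this identity by expanding both sides as polynomials in the $a_{ij}$'s via formula (\ref{def-quasidet}) and then matching the coefficient of each monomial separately. Every monomial produced by such an almost-triangular quasideterminant has the shape $a_{n j_1} a_{j_1-1, j_2} \cdots a_{j_l-1, 1}$ for some strictly decreasing sequence $n \geq j_1 > j_2 > \ldots > j_l > 1$ (the empty sequence giving $a_{n1}$), so it suffices to fix one such monomial and check that both sides assign it the same scalar.

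On the LHS, substituting $b_i = -(n-i)$ places a factor $-1/(n-j_s+1)$ between each consecutive pair of $a$'s in (\ref{def-quasidet}). After collecting the sign $(-1)^{l+1}$ and the overall prefactor $1/n$, the LHS coefficient reduces to a single closed fraction of the form $\pm\,1/\bigl(n\prod_{s=1}^l (n-j_s+1)\bigr)$.

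On the RHS, the key geometric observation is that $T_j$ deletes row $j$ and column $j+1$, so the fixed monomial survives $T_J$ precisely when $J$ is disjoint from the "forbidden" set $\{j_1-1,\ldots,j_l-1\}$. Writing $S = \{1,\ldots,n-1\}\setminus\{j_s-1:1\le s\le l\}$, a set of size $n-1-l$, only subsets $J\subseteq S$ contribute. For each such $J$ with $|J|=k$, I would track how indices relabel in the reduced $(n-k)\times(n-k)$ matrix: setting $n_s(J)=|\{j\in J:j<j_s-1\}|$, the factor $b_{j_s-1}^{-1}$ in the original expansion becomes $1/\bigl(j_s-1-n_s(J)\bigr)$ in the reduced matrix, whose off-diagonal entries now carry labels $1,\ldots,n-k-1$. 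Combined with the weight $(-1)^{n-k-1}/(n-k)$, the RHS contribution to the chosen monomial becomes an alternating sum $\sum_{J\subseteq S}\pm\frac{1}{n-k}\prod_{s=1}^l\frac{1}{j_s-1-n_s(J)}$.

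The main obstacle is therefore the scalar identity equating this alternating sum to the LHS closed form. I would attack it by induction on $|S|=n-1-l$: peel off the smallest element of $S$, split the sum into terms in which $J$ does and does not contain it, and recognize the resulting difference as a telescoping step of a partial-fraction decomposition in the variable $n-k$; a useful sanity check is $n\le 3$, where both sides reduce to transparent one- or two-term partial fractions. This combinatorial identity is precisely the general statement about quasideterminants that the author has deferred to the companion paper \cite{LT}; granted it, the monomial-by-monomial matching completes the proof.
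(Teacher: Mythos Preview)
The paper does not actually prove Proposition~\ref{pr-kaleid}: immediately before stating it, the author writes that the identity ``follows from a general identity for quasideterminants to be presented in a separate paper \cite{LT},'' and no argument is supplied in this paper. There is therefore no in-paper proof against which to compare your approach.

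Your reduction is sound in outline. Expanding both sides via (\ref{def-quasidet}) and matching the coefficient of a fixed monomial $a_{n j_1}a_{j_1-1,j_2}\cdots a_{j_l-1,1}$ is the natural move, and your identification of the forbidden set $\{j_1-1,\ldots,j_l-1\}$ is correct: $T_j$ deletes row $j$ and column $j+1$, and the rows used by the monomial are $n,j_1-1,\ldots,j_l-1$ while the columns used are $j_1,\ldots,j_l,1$, so both constraints collapse to $j\notin\{j_s-1\}$. The index-shift bookkeeping under the composite $T_J$ also looks right.

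That said, the proposal is not yet a proof. All of the content has been pushed into the scalar identity equating the closed-form LHS coefficient with the alternating sum over $J\subseteq S$ on the RHS, and for that you only sketch a plan (peel off the smallest element of $S$, telescope via partial fractions). This step is not routine: the denominators $j_s-1-n_s(J)$ are coupled to one another through $J$, so a single inclusion/exclusion split does not obviously simplify all $l$ factors at once, and the ``variable $n-k$'' you mention does not appear in those factors. Writing out the $l=1$ and $l=2$ cases in full would tell you whether the induction really closes. Your own final sentence --- ``granted it, the monomial-by-monomial matching completes the proof'' --- is honest but makes clear that what you have is a reduction \emph{to}, not a proof \emph{of}, exactly the identity the author deferred to \cite{LT}.
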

\begin{example}
\label{ex-4kaleid}
Consider a four by four quasideterminant $ Q_4(-3,-2,-1) $ and its kaleidoscopic expansion:
\begin{align*}
& \frac1{4} \begin{vmatrix}
a_{11} & -3 & 0 & 0 \\
a_{21} & a_{22} & -2 & 0 \\
a_{31} & a_{32} & a_{33} & -1 \\
\fbox{$a_{41}$} & a_{42} & a_{43} & a_{44}
\end{vmatrix} = \\
 = & \left( - \frac1{4} T_{\emptyset} + \frac1{3} \left( T_1 + T_2 + T_3 \right) -
\frac1{2} \left( T_1 T_2 + T_1 T_3 + T_2 T_3 \right) + T_1 T_2 T_3 T_4 \right) Q_4(1,2,3) = \\
 = & -\frac1{4}\begin{vmatrix}
a_{11} & 1 & 0 & 0 \\
a_{21} & a_{22} & 2 & 0 \\
a_{31} & a_{32} & a_{33} & 3 \\
\fbox{$a_{41}$} & a_{42} & a_{43} & a_{44}
\end{vmatrix}
+ \frac1{3}\begin{vmatrix}
a_{21}  & 1 & 0 \\
a_{31}  & a_{33} & 2 \\
\fbox{$a_{41}$}  & a_{43} & a_{44}
\end{vmatrix} +
\frac1{3}\begin{vmatrix}
a_{11} & 1  & 0 \\
a_{31} & a_{32}  & 2 \\
\fbox{$a_{41}$} & a_{42}  & a_{44}
\end{vmatrix} +
\frac1{3}\begin{vmatrix}
a_{11} & 1 & 0  \\
a_{21} & a_{22}  & 2 \\
\fbox{$a_{41}$} & a_{42} & a_{43}
\end{vmatrix} - \\
& - \frac1{2}\begin{vmatrix}
a_{31} & 1  \\
\fbox{$a_{41}$} & a_{44}
\end{vmatrix} - \frac1{2}\begin{vmatrix}
a_{12} & 1 &  \\
\fbox{$a_{41}$} & a_{43}
\end{vmatrix} -
\frac1{2}\begin{vmatrix}
a_{11} & 1 \\
\fbox{$a_{41}$} & a_{42}
\end{vmatrix}  + a_{41}
\end{align*}
\end{example}

If $ a_{kj} = \Psi_{i_{n - k +1} + \ldots + i_{n - j +1}} $, where $ ( i_1, \ldots, i_n) $ are parts of the composition $ I $
 then (\ref{eq-kaleid}) implies
\begin{state}
\begin{equation}
\label{eq-triang}
F^{I} = \sum_{J \preceq I} M^J,
\end{equation}
where the sum is over compositions in the reverse refinement order.

And conversely, by the inclusion-exclusion principle,
\begin{equation}
\label{eq-converse-triang}
M^{I} = \sum_{J \preceq I} (-1)^{\ell(I) - \ell(J)} F^J
\end{equation}
\end{state}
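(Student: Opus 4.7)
The plan is to apply the kaleidoscopic identity of Proposition~\ref{pr-kaleid} to the almost-triangular matrix with entries $a_{kj}=\Psi_{i_{n-k+1}+\cdots+i_{n-j+1}}$. With these entries, the left-hand side $\frac{1}{n}Q_n(-(n-1),\ldots,-1)$ of~\eqref{eq-kaleid} is exactly $F^I$ by~\eqref{def-primf}, while $Q_n(1,2,\ldots,n-1)=(-1)^{n-1}nM^I$ by~\eqref{def-primm}. The whole task thus reduces to identifying each term $T_J\,Q_n(1,\ldots,n-1)$ on the right-hand side with the matrix defining $M^K$ for a suitable composition $K\preceq I$.

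For this, I would interpret the action of $T_j$ on our matrix as \emph{merging} two adjacent parts of $I$. Since $T_j$ deletes row $j$ and column $j+1$, the entry sitting at position $(j,j)$ of the reduced matrix is the old entry $a_{j+1,j}=\Psi_{i_{n-j}+i_{n-j+1}}$, exactly the power sum indexed by the merged block. A row-by-row check then shows that for any subset $J=\{j_1,\ldots,j_k\}\subseteq\{1,\ldots,n-1\}$, the operator $T_J$ converts the original matrix into the matrix defining $M^K$, where $K\preceq I$ is the composition obtained from $I$ by erasing the $k$ dividers in positions $\{n-j:j\in J\}$. Since the off-diagonal slots are refilled with the first letters $1,\ldots,n-k-1$ of $\mathbb{N}$, one gets
\[
T_J\,Q_n(1,\ldots,n-1)=Q_{n-k}(1,\ldots,n-k-1)=(-1)^{n-k-1}(n-k)M^K.
\]

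Substituting this into~\eqref{eq-kaleid}, the coefficient of $M^K$ becomes
\[
\frac{(-1)^{n-k-1}}{n-k}\cdot(-1)^{n-k-1}(n-k)=1,
\]
and as $J$ ranges over all subsets of $\{1,\ldots,n-1\}$, the image $K$ ranges bijectively over all compositions coarser than $I$. This yields $F^I=\sum_{K\preceq I}M^K$. The converse identity then follows by M\"obius inversion on the reverse refinement order below $I$, which is isomorphic to the Boolean lattice on the $\ell(I)-1$ dividers of $I$ and hence has M\"obius function $\mu(K,I)=(-1)^{\ell(I)-\ell(K)}$; equivalently, it can be obtained by direct inclusion-exclusion.

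The main obstacle is the bookkeeping in the second step: one must verify that every surviving entry of the reduced matrix, not just the new diagonal entries, is of the form $\Psi_{i_{a}+i_{a+1}+\cdots+i_{b}}$ for the appropriate consecutive block of parts of $K$, and sits in the correct position so that the reduced matrix is literally the defining matrix of $M^K$. Once this is in place, the sign and normalization in~\eqref{eq-kaleid} cancel to produce coefficient exactly $+1$ on each $M^K$, for \emph{every} $J$.
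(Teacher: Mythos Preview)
Your proposal is correct and follows the same route as the paper: you specialize the kaleidoscopic identity~\eqref{eq-kaleid} to $a_{kj}=\Psi_{i_{n-k+1}+\cdots+i_{n-j+1}}$, identify $T_J$ with merging the parts of $I$ across the dividers indexed by $\{n-j:j\in J\}$, and observe that the sign/normalization factors cancel to give coefficient $+1$ on each $M^K$. The paper's own argument is exactly this specialization (stated in one sentence, with the bookkeeping left to the reader and illustrated by Example~\ref{ex-4kaleid}), and the converse is likewise obtained by inclusion--exclusion on the Boolean lattice of coarsenings.
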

\begin{example}
Continuing Example \ref{ex-4kaleid}, consider an expansion of $ F^{2,2,1,3} $. Therefore
let $ I = (2,2,1,3) $ and $ a_{kj} = \Psi_{i_{n - k +1} + \ldots + i_{n - j +1}}  $, then
\begin{align*}
& \frac1{4} \begin{vmatrix}
\Psi_3 & -3 & 0 & 0  \\
 \Psi_4 & \Psi_1 & -2 & 0\\
\Psi_6 & \Psi_3 & \Psi_2 & -1 \\
 \fbox{$\Psi_8$}& \Psi_5 & \Psi_4 & \Psi_2 \\
 \end{vmatrix} =
 - \frac1{4} \begin{vmatrix}
\Psi_3 & 1 & 0 & 0  \\
 \Psi_4 & \Psi_1 & 2 & 0\\
\Psi_6 & \Psi_3 & \Psi_2 & 3 \\
 \fbox{$\Psi_8$}& \Psi_5 & \Psi_4 & \Psi_2 \\
 \end{vmatrix} + \frac1{3}
 \begin{vmatrix}
\Psi_4  & 1 & 0 \\
\Psi_6  & \Psi_2 & 2 \\
\fbox{$\Psi_8$}  &\Psi_4 & \Psi_2
\end{vmatrix} +
\frac1{3}\begin{vmatrix}
\Psi_3 & 1  & 0 \\
\Psi_6 & \Psi_3  & 2 \\
\fbox{$\Psi_8$}  &\Psi_5 & \Psi_2
\end{vmatrix} + \\
& +\frac1{3}\begin{vmatrix}
\Psi_3 & 1 & 0  \\
\Psi_4 & \Psi_1  & 2 \\
\fbox{$\Psi_8$}  &\Psi_5 & \Psi_4
\end{vmatrix}
- \frac1{2}\begin{vmatrix}
\Psi_6 & 1  \\
\fbox{$\Psi_8$} & \Psi_2
\end{vmatrix} - \frac1{2}\begin{vmatrix}
\Psi_4 & 1 &  \\
\fbox{$\Psi_8$} & \Psi_4
\end{vmatrix} -
\frac1{2}\begin{vmatrix}
\Psi_3 & 1 \\
\fbox{$\Psi_8$} & \Psi_5
\end{vmatrix}  + \Psi_8,
\end{align*}
i.e.
\[
F^{2,2,1,3} = M^{2,2,1,3} + M^{2,2,4} + M^{2,3,3} + M^{4,1,3} + M^{2,6} + M^{4,4} + M^{5,3} + M^8
\]
\end{example}
\subsection{Transitions Between Complete and Power Sums.}
First recall the notation and transition formulas between power sums and complete symmetric functions obtained in \cite{GL}.
Let $I = (i_1, \ldots , i_m)$ be a composition. Define $\pi(I)$ as follows
\[
\pi_u(I) = i_1 (i_1 + i_2) \ldots (i_1 + i_2 + \ldots + i_m)
\]
In other words, $\pi_u(I) $ is the product of the successive partial sums of the entries of
the composition $I$. \\
Let  $J$ be a composition which is finer than $I$ and
$J = (J_1, . . . , J_m) $ be the unique decomposition of $J$ into compositions $(J_i)_{i=1,m} $ such that
$ |J_p| = i_p, p = 1,\ldots ,m $. Define
\[
\pi_u(J, I) = \prod_{i=1} \pi_u(J_i)
\]
Then, according to the Proposition 4.5 of \cite{GL}
\begin{align}
\label{eq:S-through-Psi}
& S^I = \sum_{J \succeq I} \frac1{\pi_u(J, I)} \Psi^J \\
\label{eq:Psi-through-S}
& \Psi^I = \sum_{J \succeq I} (-1)^{ \ell(J) - \ell(I)} lp(J,I) S^J
\end{align}
\subsection{Noncommutative Monomial and Power Sums.}
Consider $ J = (j_1, \ldots, j_s) \preceq I =(i_1, \ldots, i_n) $, i.e.
\[
J = (i_1 + \ldots + i_{p_1}, i_{p_1 + 1} + \ldots + i_{p_2}, \ldots, i_{p_{k-1}+1} + \ldots + i_{p_k},
\ldots, i_{p_s } + \ldots + i_n )
\]
for some nonnegative $ p_1, \ldots, p_s $.
Set $ p_0 = 0 $.
Then the formula for transition from power sum to  the monomial basis can be written as
\begin{state}
\begin{equation}
\label{eq:Psi-through-primitive}
\Psi^I = \sum_{J \preceq I} \prod_{k=1}^{\ell(J)} ( \ell(J) - k + 1)^{p_k - p_{k -1}} M^J
\end{equation}
\end{state}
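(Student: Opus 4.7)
The plan is to prove the identity by induction on $n = \ell(I)$, using the Pieri-type formula (\ref{eq-pieri}) as the engine and tracking how compositions $J \preceq I$ split into two classes depending on whether their first part equals $i_1$ or groups $i_1$ together with some of the following parts.

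The base case $n = 1$ is immediate: the only $J \preceq (i_1)$ is $(i_1)$ itself, the coefficient is $1^1 = 1$, and indeed $\Psi_{i_1} = M^{(i_1)}$.

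For the inductive step, given $I = (i_1, \ldots, i_n)$, write $\Psi^I = \Psi_{i_1} \cdot \Psi^{I'}$ with $I' = (i_2, \ldots, i_n)$. By the inductive hypothesis
\[
\Psi^{I'} = \sum_{J' \preceq I'} c_{J',I'}\, M^{J'},
\qquad c_{J',I'} = \prod_{k=1}^{\ell(J')}(\ell(J') - k + 1)^{p'_k - p'_{k-1}},
\]
so multiplying on the left by $\Psi_{i_1}$ and applying (\ref{eq-pieri}) yields
\[
\Psi^I = \sum_{J' \preceq I'} c_{J',I'}\Bigl[(\ell(J')+1)\,M^{(i_1)\cdot J'} + \ell(J')\,M^{(i_1)\triangleright J'}\Bigr].
\]
The key observation is then a bijection: every $J \preceq I$ arises exactly once in the right-hand side, either as $(i_1)\cdot J'$ (when the first part of $J$ equals $i_1$, so $p_1 = 1$) or as $(i_1)\triangleright J'$ (when the first part of $J$ is $i_1 + i_2 + \cdots + i_m$ for some $m \geq 2$, in which case $J'$ is obtained from $J$ by removing $i_1$ from the first block).

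Finally, one checks the coefficients match. In the first case, $\ell(J) = \ell(J')+1$ and the sequence of gaps $p_k - p_{k-1}$ for $J$ starts with $1$ followed by the gaps for $J'$, so the target coefficient becomes $\ell(J) \cdot c_{J',I'} = (\ell(J')+1)\cdot c_{J',I'}$, matching the first Pieri term. In the second case, $\ell(J) = \ell(J')$ and only the first gap changes, from $p'_1$ to $p'_1 + 1$, producing an extra factor of $\ell(J)$; this gives coefficient $\ell(J)\cdot c_{J',I'}= \ell(J')\cdot c_{J',I'}$, matching the second Pieri term. The main obstacle is purely bookkeeping: verifying that the two families $\{(i_1)\cdot J' : J' \preceq I'\}$ and $\{(i_1)\triangleright J' : J' \preceq I'\}$ partition the set $\{J : J \preceq I\}$ without overlap, and that the two combinatorial descriptions of the cut-point vector $(p_0, \ldots, p_{\ell(J)})$ align correctly with the exponential product defining $c_{J,I}$.
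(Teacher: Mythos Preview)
Your proposal is correct and follows exactly the approach indicated in the paper, which states only ``By induction, using (\ref{eq-pieri}).'' You have supplied the details that the paper omits: the splitting of $\{J \preceq I\}$ according to whether $p_1 = 1$ or $p_1 \geq 2$, the corresponding bijection with $\{(i_1)\cdot J'\} \sqcup \{(i_1)\triangleright J'\}$ for $J' \preceq I'$, and the verification that the coefficients transform correctly under each operation.
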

\begin{proof}
By induction, using (\ref{eq-pieri}).
\end{proof}
\subsection{Monomial and Complete.}
Combining the expansion of monomial functions in power sums and power sums in complete, i.e.
equations (\ref{eq:Psi-through-S}) and (\ref{eq:pi-through-Psi}) one can obtain:
\begin{align}
\label{eq:pi-through-S}
& M^I = \sum_{J \preceq I, \ K \succeq J} \frac{(-1)^{\ell(I) - \ell(K)}}{ \prod_{k=0}^{s-1}( \ell(I) - p_k)} lp(K,J) S^K
 \\
 & \text{ and conversely} \notag \\
\label{eq:S-through-pi}
& S^I = \sum_{ J \succeq I, \ K \preceq J} \frac{ \prod_{k=1}^{\ell(K)}(\ell(K) - k + 1)^{p_k - p_{k-1}}}{ \pi_u(J,I)} M^K
\end{align}
\subsection{Ribbon Schur Functions, Monomial Basis and Noncommutative Kostka Numbers.}
The expansion of monomials through ribbon Schur functions can be obtained by combining (\ref{eq:pi-through-Psi}) and the formula for
the expansion of power sums through ribbon Schur from Proposition 4.23 of \cite{GL}:
\[
\Psi^I = \sum_{|J|=|I|} psr(J,I) R^J
\]
that is
\begin{equation}
\label{eq:pi-through-ribbon}
 M^I = \sum_{J \preceq I } \frac{(-1)^{\ell(I) - \ell(J)}}{ \prod_{k=0}^{s-1} (\ell(I) - p_k)} \Psi^J =
\sum_{J \preceq I, |K|=|I| } \frac{(-1)^{\ell(I) - \ell(J)}}{ \prod_{k=0}^{s-1} (\ell(I) - p_k)} psr(K,J) R^K
\end{equation}
Consider an expansion of Ribbon Schur functions in primitive monomials
\begin{align*}
& R^I  = \sum_{ J} K_{IJ} M^J,
\end{align*}
where $ K_{IM} $ can be called the \textbf{noncommutative Kostka numbers} by analogy with the classical case. \\
The following was stated as a conjecture \cite{LT1} and  has since been proven in \cite{N}. \\
\begin{state}
Noncommutative Kostka numbers are \textbf{positive integers}.
\end{state}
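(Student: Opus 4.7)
The plan is to prove the stronger statement that $R^I$ expands with non-negative integer coefficients in the fundamental basis, \emph{i.e.}, $R^I = \sum_J c_{IJ}\, L^J$ with each $c_{IJ} \in \mathbb{Z}_{\geq 0}$, and then to deduce the Kostka claim from the defining identity (\ref{def-fundamental}). Once $L$-positivity is in hand, substituting $L^J = \sum_{K \succeq J} M^K$ into the $L$-expansion of $R^I$ yields
\[
R^I \;=\; \sum_J c_{IJ} \sum_{K \succeq J} M^K \;=\; \sum_K \Bigl( \sum_{J \preceq K} c_{IJ} \Bigr) M^K,
\]
exhibiting each noncommutative Kostka number $K_{IK} = \sum_{J \preceq K} c_{IJ}$ as a sum of non-negative integers, and hence itself a non-negative integer.

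To establish $L$-positivity I would pursue a combinatorial model in the spirit of Gessel's classical identity $s_\lambda = \sum_T L_{\text{Des}(T)}$, where the sum ranges over standard Young tableaux $T$ of shape $\lambda$. The noncommutative analogue should introduce standard fillings of a ribbon of shape $I$ together with an appropriately defined descent composition, and then prove that $c_{IJ}$ counts such fillings with descent composition equal to $J$. The small cases $R^{(2,1)} = 2\, L^{(2,1)}$ and $R^{(1,2)} = L^{(1,2)} + L^{(2,1)}$ are consistent with the existence of such a direct description and provide a sanity check on the correct normalization of descents.

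A more algebraic alternative proceeds by induction on $|I|$, starting from the Jacobi--Trudi-type expansion $R^I = \sum_{J \preceq I} (-1)^{\ell(J) - \ell(I)} S^J$ of (\ref{def:ribbon-schur}) together with the identity $S_n = L^{1^n}$, which follows from (\ref{def-fundamental}) since $(1^n)$ is the finest composition of $n$. Each $S^J$ then factors as $L^{1^{j_1}} \cdots L^{1^{j_s}}$, and the multiplication rule for the fundamentals $L^I$ established in Section \ref{sec-fundamental} expresses this product explicitly in the $L$-basis. The main obstacle, common to both approaches, is showing that the resulting signed sum over $J \preceq I$ collapses --- after substantial cancellation --- to a manifestly non-negative integer combination of $L^J$; this is precisely the point at which a bijective argument or a sign-reversing involution becomes essential, and it is the delicate combinatorial heart of the proof carried out in \cite{N}.
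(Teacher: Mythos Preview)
The paper does not actually prove this proposition: it records that the statement was conjectured in \cite{LT1} and proved in \cite{N}, and then moves on to explicit formulas for hooks. So there is no in-paper argument to compare your proposal against; in that respect your deferral of the ``delicate combinatorial heart'' to \cite{N} is exactly what the paper itself does.

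Your reduction of $M$-positivity to $L$-positivity via $L^J=\sum_{K\succeq J}M^K$ is correct and worth stating, since it shows at once that Kostka--Gessel positivity (also attributed to \cite{N} in the paper) is the stronger of the two assertions: $K_{IK}=\sum_{J\preceq K}G_{IJ}$.

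One slip: you write $S_n=L^{1^n}$ ``since $(1^n)$ is the finest composition of $n$.'' Precisely because $(1^n)$ is finest, the sum $L^{1^n}=\sum_{J\succeq 1^n}M^J$ collapses to the single term $M^{1^n}=\Lambda_n$, not $S_n$. The correct identity is $S_n=L^{(n)}$: every composition of $n$ refines $(n)$, and the paper records $S_n=\sum_{|I|=n}M^I$. This breaks your ``algebraic alternative'' as written, since $S^J$ factors as $L^{(j_1)}\cdots L^{(j_s)}$, not as a product of $L^{1^{j_k}}$'s. The factorization is still a product of fundamentals, so the multiplication rule for the $L$'s still applies in principle, but the explicit shapes of the factors change.

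Beyond that correction, your proposal is an outline rather than a proof: both the combinatorial-model route and the algebraic route terminate at the same unresolved point---collapsing the alternating sum over $J\preceq I$ to something manifestly non-negative---and you correctly identify this as the content of \cite{N}. Your write-up is honest about its own gap, and that gap coincides with the one the paper itself leaves to the external reference.
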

For some simple shapes one can obtain explicit formulas:
\begin{state}
\begin{align}
\label{eq:Kostka-hook}
& R^{k 1^r} =  \binom{k + r -1}{r} \sum_{|I|=k} M^{I \cdot 1^r} \\
\label{eq:lower-hook-monomial}
& R^{1^r k} = \sum_{|J|=r} \sum_{|I|=k} \binom{ \ell(I) + \ell(J) -1}{ r} M^{J \cdot I} +
\binom{  \ell(I) + \ell(J) -2}{ r} M^{J \triangleright I}
\end{align}
\end{state}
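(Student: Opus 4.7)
The plan is to first express each hook ribbon Schur function as a signed sum of products $S\cdot\Lambda$ (for the upper hook) or $\Lambda\cdot S$ (for the lower hook), then expand each product into the monomial basis via the product formula (\ref{eq-pimultiply}), and finally collapse the resulting alternating binomial sums with Vandermonde-type identities.

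The first step applies the ribbon multiplication rule $R^I R^J = R^{I\cdot J} + R^{I\triangleright J}$ (from \cite{GL}) with $J=(1)$ to a shorter hook $I$. This yields the recursions
\[
R^{k\,1^r} \;=\; S_k\,\Lambda_r \;-\; R^{(k+1)\,1^{r-1}}, \qquad R^{1^r\,k} \;=\; \Lambda_r\,S_k \;-\; R^{1^{r-1}(k+1)},
\]
which unfold by induction on $r$ into the telescoping formulas
\[
R^{k\,1^r} \;=\; \sum_{j=0}^{r}(-1)^j S_{k+j}\,\Lambda_{r-j}, \qquad R^{1^r\,k} \;=\; \sum_{j=0}^{r}(-1)^j \Lambda_{r-j}\,S_{k+j}.
\]

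Next, substitute $S_n=\sum_{|I|=n}M^I$ and $\Lambda_n=M^{1^n}$, and expand each $M^I\cdot M^{1^{r-j}}$ (respectively $M^{1^{r-j}}\cdot M^I$) using (\ref{eq-pimultiply}). For the upper hook, $K\triangleright 1^{r-j}$ always ends in $(k_{\text{last}}+1,\,1^{r-j-1})$ with $k_{\text{last}}+1\ge 2$, which cannot match a composition of the form $I'\cdot 1^r$ (whose last $r$ parts are all $1$); hence every triangleright contribution vanishes identically. The surviving dot-product terms $K\cdot 1^{r-j}=I'\cdot 1^r$ force $K=I'\cdot 1^j$, and then $K\preceq I$ with $|I|=k+j$ forces $I=I''\cdot 1^j$ for some refinement $I'\preceq I''$ (the trailing ones of $K$ cannot be refined further). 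Collecting, the coefficient of $M^{I'\cdot 1^r}$ reduces to
\[
\sum_{I'\preceq I''}\;\sum_{j=0}^{r}(-1)^{j}\binom{\ell(I')+r}{\ell(I'')+j}.
\]
The inner alternating sum collapses via $\sum_{j=0}^{r}(-1)^{j}\binom{N}{a+j}=(-1)^{r}\binom{N-1}{a+r}+\binom{N-1}{a-1}$ (a consequence of $\sum_{m=0}^{M}(-1)^{m}\binom{N}{m}=(-1)^{M}\binom{N-1}{M}$); the first term is $0$ because $\ell(I'')\ge\ell(I')$ forces $\ell(I'')+r>\ell(I')+r-1$. Summing the surviving $\binom{\ell(I')+r-1}{\ell(I'')-1}$ over refinements by means of the generating function $\sum_{I'\preceq I''}x^{\ell(I'')}=x^{\ell(I')}(1+x)^{k-\ell(I')}$ and Vandermonde convolution yields $\binom{k+r-1}{r}$, independent of $I'$, as claimed.

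The lower hook follows the same scheme except that now both $M^{K\cdot I}$ and $M^{K\triangleright I}$ contributions survive, producing the two target families $M^{J\cdot I}$ and $M^{J\triangleright I}$; the triangleright terms encode the configurations in which the last of the prepended ones has been absorbed into the first part of $I$. After the analogous alternating-sum simplification, the two binomials $\binom{\ell(I)+\ell(J)-1}{r}$ and $\binom{\ell(I)+\ell(J)-2}{r}$ emerge — they differ by one in the upper index precisely because the triangleright configuration has one fewer "slot" available for the row of ones. The main obstacle is the bookkeeping in this lower-hook case: identifying exactly which triples $(j,K,I)$ in the expansion of $\sum_j (-1)^j\Lambda_{r-j}S_{k+j}$ map to each target $M^{J\cdot I}$ or $M^{J\triangleright I}$, and verifying that the resulting alternating Pascal-type sums telescope to the two binomials above.
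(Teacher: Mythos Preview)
Your route is different from the paper's: the paper argues by induction on $k$, using only the single identity $S_k\Lambda_r=R^{k1^r}+R^{(k+1)1^{r-1}}$ (respectively $\Lambda_rS_k=R^{1^rk}+R^{1^{r-1}(k+1)}$) at each step, with the base $k=1$ reducing to $R^{1^{r+1}}=\Lambda_{r+1}=M^{1^{r+1}}$. You instead unfold the recursion completely to the telescoping sum $R^{k1^r}=\sum_j(-1)^jS_{k+j}\Lambda_{r-j}$ and attack the whole thing at once via (\ref{eq-pimultiply}).

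There is a genuine gap in your upper-hook argument. You correctly compute the coefficient of $M^{I'\cdot 1^r}$ and obtain $\binom{k+r-1}{r}$, but you never show that the coefficient of $M^L$ is zero when $L$ has fewer than $r$ trailing ones. The sentence ``hence every triangleright contribution vanishes identically'' is not correct: the triangleright terms do not vanish; they produce precisely the monomials $M^L$ with $L$ \emph{not} of the form $I'\cdot 1^r$, and those must be shown to cancel against dot contributions from other values of $j$ and against the final term $(-1)^rS_{k+r}$. For example, with $k=2$, $r=1$ one has $R^{21}=S_2\Lambda_1-S_3$; the triangleright part of $S_2\Lambda_1$ is $M^3+M^{12}\neq 0$, and it is killed only by the matching monomials inside $-S_3$. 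To complete your approach you must, for each $L$ with exactly $s<r$ trailing ones, collect the dot contributions from $j=r-s,\ldots,r$ together with the single triangleright contribution at $j=r-s-1$, and check that this alternating binomial sum collapses to $0$. That is exactly the bookkeeping you flagged as ``the main obstacle'' for the lower hook; it is already present, not bypassed, in the upper-hook case.
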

\begin{proof}
Both formulas are proved by induction on $ k $. For $ k = 1 $ and arbitrary $ r $ both formulas are obvious.
For the first formula one makes use of the product
\[
S_k \cdot \Lambda_r = R^{k 1^r} + R^{k+1 1^{r - 1}}
\]
for the induction step, while for the second --
\[
\Lambda_r \cdot S_k = R^{1^r k } + R^{1^{r-1} k +1}
\]
\end{proof}
\subsection{Ribbon Schur Functions, Fundamental Basis and Kostka-Gessel Numbers.}
\begin{equation}
\label{eq-ribbon-fundamental}
R^I = \sum_{J} G_{IJ} L^J,
\end{equation}
where $ G_{IJ} $ may be called Kostka-Gessel numbers. \\
The following statement was also stated as a conjecture \cite{LT1} and has since been proved in \cite{N}:
\begin{state}
Kostka-Gessel numbers are non-negative integers.
\end{state}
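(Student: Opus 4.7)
The plan is to adapt the combinatorial strategy used by Gessel in his classical proof that Schur functions are fundamental-positive, paralleling the approach used in \cite{N} to establish positivity of the noncommutative Kostka numbers $K_{IJ}$.

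I would first verify two extreme cases as a sanity check. Since $(n)$ is the coarsest composition of $n$, Definition \ref{def-fundamental} gives $L^{(n)} = \sum_{|J|=n} M^J = S_n = R^{(n)}$; since $(1^n)$ is the finest, the same definition gives $L^{(1^n)} = M^{(1^n)} = \Lambda_n = R^{(1^n)}$. Thus $G_{(n),J} = \delta_{J,(n)}$ and $G_{(1^n),J} = \delta_{J,(1^n)}$, both non-negative integers as claimed.

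For the general case, I would combine the expansion $R^I = \sum_J K_{IJ} M^J$, whose coefficients are non-negative integers by the previous proposition, with the M\"obius-type inversion $M^J = \sum_{K \succeq J}(-1)^{\ell(K) - \ell(J)} L^K$ from (\ref{eq:monomial-through-fundamental}). Swapping the order of summation yields the closed expression
\begin{equation*}
G_{IK} \;=\; \sum_{J \preceq K}(-1)^{\ell(K) - \ell(J)} K_{IJ},
\end{equation*}
from which integrality of $G_{IK}$ is immediate.

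The hard part will be non-negativity of this alternating sum. The natural strategy is to use the combinatorial model for $K_{IJ}$ supplied in \cite{N} and construct a sign-reversing involution on the signed disjoint union $\bigsqcup_{J \preceq K} \mathcal{T}_{I,J}$ (with sign $(-1)^{\ell(K) - \ell(J)}$), whose fixed points biject with a canonical set of cardinality $G_{IK}$. The principal technical obstacle is that the monomial and fundamental bases of \textsf{\textbf{NSym}} are defined using opposite directions of refinement ($\preceq$ for $F^I$, $\succeq$ for $L^I$), so the involution must carefully reconcile the underlying refinement data with a Gessel-type descent statistic — exactly the delicate bookkeeping that makes the analogous classical result a nontrivial theorem.
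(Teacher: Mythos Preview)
The paper does not supply its own proof of this proposition: it records the statement as a former conjecture and defers entirely to \cite{N} for the argument. There is therefore no in-paper proof to compare your approach against.

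Your preliminary reductions are sound as far as they go. The sanity checks for $I=(n)$ and $I=(1^n)$ are correct, and your derivation of
\[
G_{IK}=\sum_{J\preceq K}(-1)^{\ell(K)-\ell(J)}K_{IJ}
\]
from the monomial expansion together with the M\"obius inversion (\ref{eq:monomial-through-fundamental}) is valid, so integrality of $G_{IK}$ does follow immediately once the previous proposition on $K_{IJ}$ is granted.

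The genuine gap is in the non-negativity step. What you have written there is a strategy outline, not an argument: you name a sign-reversing involution on $\bigsqcup_{J\preceq K}\mathcal{T}_{I,J}$ as the intended tool, but you do not define the involution, do not identify its fixed-point set, and do not verify any cancellation. Since non-negativity is the entire substance of the proposition beyond integrality, the proof as written is incomplete at exactly the point that matters. There is also a circularity concern: \cite{N} is the same reference that establishes both the positivity of $K_{IJ}$ you are relying on \emph{and} the positivity of $G_{IK}$ you are trying to prove, so invoking its combinatorial model as a black box while re-deriving $G_{IK}\ge 0$ from scratch would require you to isolate precisely which part of \cite{N} you are importing and to exhibit the involution explicitly rather than merely asserting that one should exist.
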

\section{Quasi-Symmetric Analogs of Power Sum, Elementary, Complete, and Ribbon Schur Bases.}
\label{sec-quasi-symmetric}
Mapping noncommutative monomial symmetric functions to quasi-symmetric monomials, one can identify analogs of multiplicative basis in
\textbf{\textsf{QSym}} corresponding to each composition $ I $. \\
Using the expansion of power sums, elementary, and complete in the monomial basis (\ref{eq:Psi-through-primitive}), (\ref{eq:S-through-pi})
define
\begin{definition}
\textbf{Quasi-symmetric powers sums}
\[
 \Psi_I  = \sum_{J \preceq I} \prod_{k=1}^{\ell(J)} ( \ell(J) - k + 1)^{p_k - p_{k -1}} M_J
 \]
\textbf{Quasi-symmetric complete}
\[
S_I = \sum_{ J \succeq I, \ K \preceq J} \frac{ \prod_{k=1}^{\ell(K)}(\ell(K) - k + 1)^{p_k - p_{k-1}}}{ \pi_u(J,I)} M_K
\]
and \textbf{quasi-symmetric elementary} $ \Lambda_I $ by applying $ \omega $ to the expression for $ S_I $. \\
Furthermore, one can define \textbf{quasi-symmetric ribbon Schur functions} through its expansion in the fundamental (or monomial) basis
\[
R_I = \sum_{J} G_{IJ} L_J,
\]
where $ G_{IJ}$ are Kostka-Gessel numbers introduced in (\ref{eq-ribbon-fundamental}) and $ L_J $ are Gessel's fundamental
quasi-symmetric functions.
\end{definition}
\newpage
\section{Noncommutative Cauchy Identity and a Scalar Product in \textbf{\textsf{NSym}}.}
\label{sec-cauchy}
Equipped with noncommutative monomial and fundamental symmetric functions, everything is ready for
the noncommutative version of the Cauchy identity. \\
\begin{state}
Let $ X $ and $ Y $ be noncommutative alphabets, then
\[
\sum_I M^I(X) S^I(Y) =  \sum_J L^J (X) R^J(Y) .
\]
\end{state}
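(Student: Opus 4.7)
The plan is to compute the right-hand side directly from the definition of $L^J$ and reduce it to the left-hand side via a M\"obius inversion on the reverse refinement poset. Since $L^J(X) = \sum_{I \succeq J} M^I(X)$ by (\ref{def-fundamental}), substituting and interchanging the order of summation yields
\[
\sum_J L^J(X)\, R^J(Y) \;=\; \sum_J \sum_{I \succeq J} M^I(X)\, R^J(Y) \;=\; \sum_I M^I(X) \Bigl( \sum_{J \preceq I} R^J(Y) \Bigr).
\]
Comparing with the left-hand side, the identity reduces to the single claim
\[
S^I \;=\; \sum_{J \preceq I} R^J,
\]
a statement living purely in $\textbf{\textsf{NSym}}(Y)$.

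To establish this auxiliary equality I would M\"obius invert equation (\ref{def:ribbon-schur}), which reads $R^I = \sum_{J \preceq I} (-1)^{\ell(I)-\ell(J)} S^J$. For compositions of a fixed integer $n$, the reverse refinement poset is isomorphic to the Boolean lattice on the $n-1$ potential cut-points, and its M\"obius function is $\mu(J,I) = (-1)^{\ell(I)-\ell(J)}$ for $J \preceq I$. Standard M\"obius inversion therefore converts the alternating expression for $R^I$ into the positive sum $S^I = \sum_{J \preceq I} R^J$, which is exactly what is needed.

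In terms of difficulty, the argument is essentially formal: $X$ and $Y$ sit in disjoint noncommutative alphabets, so the factors $M^I(X)$ and $R^J(Y)$ pull through the summations without any reordering issues, and the interchange of summation is finite at each homogeneous degree. The main conceptual point is that $(M^I, S^I)$ plays the role of the classical pair $(m_\la, h_\la)$ while $(L^J, R^J)$ plays the role of the self-dual Schur basis, and the entire combinatorial content of the theorem is just the M\"obius inversion of (\ref{def:ribbon-schur}) on the Boolean lattice of refinements.
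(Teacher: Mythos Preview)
Your proof is correct and follows essentially the same route as the paper: the paper starts from $\sum_I M^I S^I$, expands $S^I = \sum_{J \preceq I} R^J$, interchanges the order of summation, and then recognizes $\sum_{I \succeq J} M^I = L^J$, whereas you run the identical computation from the other side and additionally supply the M\"obius inversion justifying $S^I = \sum_{J \preceq I} R^J$ (which the paper simply quotes as the known inverse of (\ref{def:ribbon-schur})).
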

\begin{proof}
\[
\sum_I M^I(X) S^I(Y) = \sum_I M^I (X) \left( \sum_{J \preceq I} R^J (Y) \right) = \sum_J \left( \sum_{I \succeq J}
M^I(X) \right) R^J (Y) = \sum_J L^J (X) R^J(Y) .
\]
\end{proof}
In fact, using the expansion of monomials and complete in power sums, one further has
\begin{align*}
& \sum_I M^I(X) S^I(Y)  =  \sum_{ I, \ K \succeq I \succeq J}
\frac{(-1)^{\ell(I) - \ell(J)}}{ \prod_{k=0}^{\ell(J)-1} (\ell(I) - p_k) \pi_u(K, I)} \Psi^J (X) \Psi^K (Y)
\end{align*}

Using the noncommutative Cauchy identity as a starting point,
define a scalar product by declaring monomial symmetric functions be dual to complete:
\begin{definition}
\label{eq:pairing}
\[
\langle M^I \mid S^J \rangle = \delta_{I J},
\]
\end{definition}
where I have introduced a notation:
\begin{align*}
& \delta_{IJ} = \begin{cases}
1, \quad I = J \\
0, \quad \text{otherwise}
\end{cases}
\end{align*}
I will take this opportunity to introduce another notation:
\begin{align*}
& \theta(I - J) = \begin{cases}
1, \quad I \succeq J \\
0, \quad \text{otherwise}
\end{cases}
\end{align*}
Then fundamental symmetric functions turn out to be dual to ribbon Schur:
\begin{state}
\label{prop:duality-fundamental-ribbon}
\[
\langle L^I \mid R^J \rangle = \delta_{I J}
\]
\end{state}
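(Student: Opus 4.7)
The plan is to prove $\langle L^I \mid R^J \rangle = \delta_{IJ}$ by expanding both sides in the dual bases $\{M^K\}$ and $\{S^M\}$ and collapsing the resulting double sum to an alternating sum over an interval in the refinement poset.

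First I would substitute the two defining expansions. By Definition \ref{def-fundamental},
\[
L^I = \sum_{K \succeq I} M^K,
\]
and by the determinantal formula (\ref{def:ribbon-schur}),
\[
R^J = \sum_{M \preceq J} (-1)^{\ell(M) - \ell(J)} S^M.
\]
Plugging these into the pairing, invoking bilinearity, and applying the duality $\langle M^K \mid S^M \rangle = \delta_{KM}$ from Definition \ref{eq:pairing} forces $K = M$, so only compositions $K$ satisfying both $K \succeq I$ and $K \preceq J$ survive. This reduces the problem to evaluating the scalar alternating sum
\[
\langle L^I \mid R^J \rangle = \sum_{K: \ I \preceq K \preceq J} (-1)^{\ell(K) - \ell(J)},
\]
where the index set is the interval $[I,J]$ in the reverse refinement order.

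The remaining task, and the only non-formal step, is to show this alternating sum equals $\delta_{IJ}$. Three cases: if $I \not\preceq J$ the interval is empty and the sum is $0$; if $I = J$ the only $K$ in the interval is $K = I$, contributing $(-1)^0 = 1$. The main step is the case $I \prec J$, where I would appeal to the standard fact that the interval $[I,J]$ in the refinement order on compositions is a Boolean lattice: compositions $K$ with $I \preceq K \preceq J$ are in bijection with subsets $\mathcal{S}$ of the set $\mathcal{B}$ of "cuts" of $J$ that lie strictly inside a block of $I$, and under this bijection $\ell(K) = \ell(I) + |\mathcal{S}|$ while $\ell(J) = \ell(I) + |\mathcal{B}|$. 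The sum then becomes
\[
(-1)^{\ell(J) - \ell(I)} \sum_{\mathcal{S} \subseteq \mathcal{B}} (-1)^{|\mathcal{S}|} = (-1)^{|\mathcal{B}|} (1-1)^{|\mathcal{B}|} = 0,
\]
since $|\mathcal{B}| = \ell(J) - \ell(I) > 0$.

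The main obstacle, such as it is, is recognizing the Boolean structure of intervals in the refinement lattice, but this is classical and independent of the noncommutative setting. No machinery beyond the definitions of $L^I$, $R^J$, and the pairing is needed; the result is a direct consequence of the fact that the expansions of $L$ in $M$ and of $R$ in $S$ are Möbius-inverse to one another on the refinement poset.
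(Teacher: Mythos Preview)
Your proof is correct and arrives at exactly the same alternating sum $\sum_{I \preceq K \preceq J} (-1)^{\ell(J)-\ell(K)}$ as the paper does. The only difference is cosmetic: the paper first packages the computation $\langle L^I \mid S^K \rangle = \theta(K-I)$ as a separate lemma (proved via the Cauchy identity and comparison with $S^K = \sum_{J \preceq K} R^J$), then pairs with the expansion of $R^J$ in the $S$ basis, whereas you expand $L^I$ in the $M$ basis directly and invoke the defining duality $\langle M^K \mid S^M \rangle = \delta_{KM}$. Your route is slightly more elementary since it avoids the Cauchy identity, and you spell out the Boolean-lattice cancellation that the paper leaves implicit; otherwise the two arguments are identical.
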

Before proving this statement, it is useful to consider the following formula:
\begin{lemma}
\[
\langle L^J  \mid S^K \rangle = \theta(K- J)
\]
\end{lemma}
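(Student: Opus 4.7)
The plan is to prove the lemma by a direct unpacking of the definition of $L^J$ and the bilinearity of the pairing, with no heavy machinery required.

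First I would recall that by Definition~\ref{def-fundamental}, the fundamental noncommutative symmetric function is expressed in the monomial basis as
\[
L^J = \sum_{I \succeq J} M^I.
\]
Substituting this into $\langle L^J \mid S^K \rangle$ and using bilinearity of the scalar product (together with Definition~\ref{eq:pairing}) gives
\[
\langle L^J \mid S^K \rangle = \sum_{I \succeq J} \langle M^I \mid S^K \rangle = \sum_{I \succeq J} \delta_{IK}.
\]

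The sum on the right collapses to $1$ precisely when $K$ appears among the compositions $I$ with $I \succeq J$, i.e.\ when $K \succeq J$, and is $0$ otherwise. By the definition of $\theta$ introduced just above the lemma, this is exactly $\theta(K - J)$, which completes the argument.

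There is essentially no obstacle here: the lemma is a one-line consequence of expanding $L^J$ in the monomial basis and applying the duality $\langle M^I \mid S^J \rangle = \delta_{IJ}$. Its role is evidently preparatory, setting up the proof of Proposition~\ref{prop:duality-fundamental-ribbon}, where one will then expand $R^J$ via $R^J = \sum_{K \preceq J}(-1)^{\ell(J)-\ell(K)} S^K$ and use this lemma to collapse the resulting alternating sum over the interval $K \preceq J$ in the reverse refinement order.
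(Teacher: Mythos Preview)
Your proof is correct, but it takes a different route from the paper's. The paper argues via the noncommutative Cauchy identity: applying $\langle \,\cdot\, \mid S^K(X)\rangle$ to both sides of $\sum_I M^I(X)S^I(Y) = \sum_J L^J(X)R^J(Y)$ yields $S^K(Y) = \sum_J \langle L^J \mid S^K\rangle\, R^J(Y)$, and comparing this with the known expansion $S^K = \sum_{J \preceq K} R^J$ identifies the coefficients as $\theta(K-J)$. Your approach simply expands $L^J$ in the monomial basis and applies the defining duality $\langle M^I \mid S^K\rangle = \delta_{IK}$ directly. Your argument is more elementary and self-contained, needing only the two definitions; the paper's route, while slightly more roundabout, illustrates how the Cauchy identity functions as a coefficient-extraction device and implicitly checks consistency between the $(M,S)$ and $(L,R)$ dualities. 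Either way the subsequent proof of Proposition~\ref{prop:duality-fundamental-ribbon} proceeds identically, as you note.
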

\begin{proof}
Let's start with the Cauchy identity and project monomials onto complete:
\begin{align*}
& \sum_I \langle M^{I}(X) \mid S^K (X) \rangle S^I(Y) = \sum_J \langle L^J (X) \mid S^K(X) \rangle R^J(Y) \quad
\text{ therefore } \\
& S^K(A) = \sum_J \langle L^J (X) \mid S^K(X) \rangle R^J(A) \\
\intertext{ comparison  this with the expansion of complete in ribbon Schur:}
& S^K(A) = \sum_{ J \preceq K } R^J
\end{align*}
concludes the proof. \qedhere
\end{proof}
\begin{proof}[Proof of Proposition \ref{prop:duality-fundamental-ribbon}]
\begin{align*}
& \langle L^I \mid R^J \rangle \stackrel{\text{ by definition of ribbon Schur}}{=}
\langle L^I | \sum_{ K \preceq J } (-1)^{\ell(J) - \ell(K)} S^K
\rangle = \sum_{J \succeq K \succeq I}  (-1)^{\ell(J) - \ell(K)} = \delta_{IJ}
\end{align*}
\qedhere
\end{proof}
\subsection{Properties of the Pairing: Isometry.}
One of the pleasing properties of this scalar product it shares with the Hall scalar product in \textbf{\textsc{Sym}} is that
\begin{state}
\label{prop-isometry}
The involution $ \omega $ is an isometry, i.e.
\begin{equation}
\label{eq:forgotten-elementary-pairing}
\langle \omega( M^I ) | \omega \left( S^J \right) \rangle = \delta_{IJ}
\end{equation}
\end{state}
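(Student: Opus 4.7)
The plan is to leverage the conjugation identities $\omega(L^K) = L^{\widetilde{K}}$ (Proposition \ref{prop-duality-fundamental}) and $\omega(R^L) = R^{\widetilde{L}}$ from equation (\ref{eq-invol}), together with the fundamental-ribbon duality $\langle L^K \mid R^L \rangle = \delta_{KL}$ of Proposition \ref{prop:duality-fundamental-ribbon}. The idea is to transport the computation from the monomial-complete pairing onto the fundamental-ribbon pairing, where $\omega$ acts by the length-preserving bijection $K \mapsto \widetilde{K}$.

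Concretely, I would expand $M^I$ in the fundamental basis via (\ref{eq:monomial-through-fundamental}) and $S^J$ in the ribbon basis via the identity $S^K = \sum_{J \preceq K} R^J$ used in the proof of Proposition \ref{prop:duality-fundamental-ribbon}. Writing these schematically as $M^I = \sum_K c_{IK} L^K$ and $S^J = \sum_L d_{JL} R^L$, the $L$-$R$ duality immediately reproduces $\langle M^I \mid S^J \rangle = \sum_K c_{IK} d_{JK} = \delta_{IJ}$, consistent with Definition \ref{eq:pairing}. Applying $\omega$ term-by-term then yields $\omega(M^I) = \sum_K c_{IK} L^{\widetilde{K}}$ and $\omega(S^J) = \sum_L d_{JL} R^{\widetilde{L}}$, and the pairing computes as
\[
\langle \omega(M^I) \mid \omega(S^J) \rangle = \sum_{K,L} c_{IK}\, d_{JL}\, \delta_{\widetilde{K},\widetilde{L}} = \sum_K c_{IK} d_{JK} = \delta_{IJ},
\]
where $\delta_{\widetilde{K},\widetilde{L}} = \delta_{K,L}$ because conjugation is an involution, hence a bijection, on compositions.

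There is no real obstacle: all the ingredients are already in place, and the argument is essentially linear-algebraic bookkeeping. A more conceptual alternative would be to apply $\omega_X \otimes \omega_Y$ directly to the Cauchy identity: reindexing $J \mapsto \widetilde{J}$ shows that the $(L,R)$-form of the kernel is invariant, so the $(M,S)$-form must also satisfy $\sum_I \omega(M^I)(X)\,\omega(S^I)(Y) = \sum_I M^I(X) S^I(Y)$. Since the Cauchy kernel characterizes dual basis pairs uniquely, $\{\omega(M^I)\}$ and $\{\omega(S^I)\}$ form a dual basis pair, giving the isometry in one stroke.
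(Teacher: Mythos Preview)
Your argument is circular relative to the paper's logical order. You invoke Proposition \ref{prop-duality-fundamental}, namely $\omega(L^K)=L^{\widetilde{K}}$, as an input; but look at how that proposition is actually proved in Section \ref{sec-cauchy}: the paper writes ``Since the scalar product is an isometry, one gets an easy proof of Proposition \ref{prop-duality-fundamental}'' and then deduces $\omega(L^I)=L^{\widetilde{I}}$ from the isometry together with $\langle L^I\mid R^J\rangle=\delta_{IJ}$ and $\omega(R^I)=R^{\widetilde{I}}$. In other words, in this paper Proposition \ref{prop-duality-fundamental} is a \emph{consequence} of Proposition \ref{prop-isometry}, not an independent fact you may feed back into its proof. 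Your ``more conceptual alternative'' via the Cauchy kernel has the same defect: showing that the $(L,R)$ form of the kernel is $\omega$-invariant already presupposes $\omega(L^K)=L^{\widetilde{K}}$.

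The paper's own proof avoids this loop by working directly with the identities that are already established: $\omega(M^I)=(-1)^{|I|-\ell(I)}F^{\bar I}$ from (\ref{duality-pi}), $\omega(S^J)=\Lambda^{\bar J}$, the triangular expansion $F^{\bar I}=\sum_{K\preceq \bar I}M^K$ from (\ref{eq-triang}), and the standard expansion of $\Lambda$ in $S$. Pairing via the defining relation $\langle M^K\mid S^T\rangle=\delta_{KT}$ and collapsing the alternating interval sum $\sum_{\bar I\succeq K\succeq \bar J}(-1)^{\ell(K)}$ then yields $\delta_{IJ}$. If you want to salvage your route, you would need an independent proof of $\omega(L^K)=L^{\widetilde{K}}$ that does not pass through the isometry; the paper does not supply one.
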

\begin{proof}[Proof of Proposition \ref{prop-isometry}]
\begin{align*}
& \langle \omega( M^I ) \mid \omega \left( S^J \right) \rangle =
\left \langle (-1)^{\ell(I) - |I|} F^{\overline{I}} \mid \Lambda^{\overline{J}} \right \rangle  \Rightarrow \\
&\intertext{ by expansion of forgotten in monomial and elementary in complete}
& \Rightarrow (-1)^{\ell(I) - |I|} \left \langle \sum_{K \preceq \overline{I} } M^K \mid
 \sum_{ T \succeq \overline{J}} (-1)^{\ell(T) - |T |} S^T \right \rangle =
  (-1)^{\ell(I) - |I|}  \sum_{K \preceq \overline{I} }  \sum_{ T \succeq \overline{J}} (-1)^{\ell(T) - |T|}
  \delta_{K T} =\\
& =  (-1)^{\ell(I) } \theta( \overline{I} - \overline{J}) \sum_{\overline{I} \succeq K \succeq \overline{J}}
 (-1)^{\ell(K)} = (-1)^{\ell(I) } \theta( \overline{I} - \overline{J})
 \sum_{\overline{I} \succeq K \succeq \overline{J}} (-1)^{\ell(K)} =
 (-1)^{\ell(I) } \theta( \overline{I} - \overline{J}) (-1)^{\ell(\overline{J})} \delta_{\overline{I}
 \overline{J}} = \delta_{IJ}
\end{align*}
\end{proof}
Since the scalar product is an isometry, one gets an easy proof of Proposition \ref{prop-duality-fundamental}:
\begin{proof}
Expand $ \omega(L^I) $ in the fundamental basis and project onto ribbon Schur basis:
\[
 \omega(L^I) = \sum_{K} c_{IK} L^K
 \]
 We have to see that in this expansion the only nonzero coefficient is $ c_{II^{\widetilde{}}} = 1 $. Indeed,
\begin{align*}
& 1 = \left \langle L^I \mid R^I \right \rangle = \left \langle \omega(L^I) \mid R^{I^{\widetilde{}}} \right \rangle =
\left \langle \sum_{K} c_{IK} L^K \mid R^{I^{\widetilde{}}} \right \rangle = c_{II^{\widetilde{}}} \\
& 0 = \left \langle L^I \mid R^J \right \rangle = \left \langle \omega(L^I) \mid R^{J^{\widetilde{}}} \right \rangle =
\left \langle \sum_{K} c_{IK} L^K \mid R^{J^{\widetilde{}}} \right \rangle = c_{IJ^{\widetilde{}}} \quad \forall J \neq I
\end{align*}
\qedhere
\end{proof}
\subsection{Scalar product between various bases.}
One can deduce values of the scalar between different bases making use of transition formulas between them
and applying involution $ \omega $. \\
For instance,
\[
\langle M^I | R^K \rangle = \langle M^I | \sum_{ K \succeq J} (-1)^{\ell(K) - \ell(J)} S^J \rangle =
\sum_{ K \succeq J} (-1)^{\ell(K) - \ell(J)} \delta_{IJ} = (-1)^{\ell(K) - \ell(I)}\theta(K - I)
\]
To write down the formula for pairing between power sums, which is particularly interesting as they are not orthogonal in contrast to the classical theory,
I need to recall some more definitions from \cite{GL}.
Denote the last part of a composition $ I = (i_1, \ldots, i_k)$ by
\[
lp(I) = i_k
\]
and let $J $ be a composition such that $ J \succeq I $. Let then
$J = (J_1, . . . , J_m) $ be the unique decomposition of $J$ into compositions $(J_i)_{i=1,m} $ such that
$|J_p| = i_p, p = 1, . . . ,m$. Define
\[
lp(J, I) = \prod_{i=1}^m lp(J_i)
\]
Then
\begin{state}
\label{eq:powersum-pairing}
\[
\langle \Psi^I | \Psi^J \rangle =  \sum_{J \preceq M \preceq I} (-1)^{\ell(M) - \ell(J)} lp(M,J)
\prod_{k=1}^{\ell(M)} ( \ell(M) - k + 1)^{p_k - p_{k -1}},
\]
where $ p_k $ are such that for each $ M $
\[
M = (i_1 + \ldots + i_{p_1}, i_{p_1 + 1} + \ldots + i_{p_2}, \ldots, i_{p_{k-1}+1} + \ldots + i_{p_k},
\ldots, i_{p_s } + \ldots + i_n )
\]
In particular
\begin{equation}
\label{eq:psiI-psiI-pairing}
\langle \Psi^I | \Psi^I \rangle = \left( \prod_{k=1}^{\ell(I)} i_k \right) \ell(I)!
\end{equation}
\end{state}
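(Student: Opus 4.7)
The plan is to prove both formulas by simply combining the two relevant expansions of $\Psi^I$ already established earlier in the paper and using the defining duality $\langle M^K \mid S^N\rangle = \delta_{KN}$. Specifically, expand the left argument in the monomial basis via (\ref{eq:Psi-through-primitive}),
\[
\Psi^I = \sum_{M \preceq I} \Biggl( \prod_{k=1}^{\ell(M)} (\ell(M) - k + 1)^{p_k - p_{k-1}} \Biggr) M^M ,
\]
where the $p_k$ are read from the coarsening $M$ of $I$; and expand the right argument in the complete basis via (\ref{eq:Psi-through-S}),
\[
\Psi^J = \sum_{N \succeq J} (-1)^{\ell(N) - \ell(J)} \, lp(N,J) \, S^N.
\]

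Next, I would apply bilinearity of the scalar product and collapse the double sum using $\langle M^M \mid S^N \rangle = \delta_{MN}$. Only indices with $M = N$ survive, and since $M \preceq I$ and $N \succeq J$ this forces $J \preceq M \preceq I$; the sum collapses to exactly
\[
\langle \Psi^I \mid \Psi^J \rangle = \sum_{J \preceq M \preceq I} (-1)^{\ell(M) - \ell(J)} \, lp(M,J) \prod_{k=1}^{\ell(M)} (\ell(M) - k + 1)^{p_k - p_{k-1}} ,
\]
which is the general formula. A subtle point I would flag explicitly is that the sequence $(p_k)$ appearing in the product depends on $M$ (as a coarsening of $I$), while $lp(M,J)$ depends on $M$ as a coarsening of $J$; keeping those two parameterizations straight is the only real bookkeeping hurdle.

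For the specialization \eqref{eq:psiI-psiI-pairing}, set $J = I$. Then the condition $J \preceq M \preceq I$ forces $M = I$, so only one term survives. The sign is $+1$ since $\ell(M) = \ell(J)$. The $p_k$ for $M = I$ as a trivial coarsening of $I$ satisfy $p_k - p_{k-1} = 1$, so the product becomes $\prod_{k=1}^{\ell(I)}(\ell(I)-k+1) = \ell(I)!$. Finally, in $lp(I,I)$ the unique decomposition of $I$ with $|J_p| = i_p$ is $J_p = (i_p)$, whose last part is $i_p$; thus $lp(I,I) = \prod_{k=1}^{\ell(I)} i_k$. Multiplying these yields $\bigl(\prod_k i_k\bigr)\ell(I)!$, as claimed.

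There is no substantive obstacle here beyond verifying that no sign or index error is introduced when the two separately-derived expansions are paired. The main thing to double-check is that the range $J \preceq M \preceq I$ is nonempty precisely when $J \preceq I$ (otherwise the pairing is zero), and that the notation for $(p_k)$ inside the product is interpreted relative to $M$ coarsening $I$ rather than $M$ coarsening $J$, consistent with the way \eqref{eq:Psi-through-primitive} was stated.
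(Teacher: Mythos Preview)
Your proposal is correct and follows essentially the same route as the paper: expand the left factor via (\ref{eq:Psi-through-primitive}) and the right factor via (\ref{eq:Psi-through-S}), then collapse using $\langle M^M \mid S^N\rangle=\delta_{MN}$. The only cosmetic difference is that the paper first isolates the intermediate identity $\langle M^I \mid \Psi^J\rangle = (-1)^{\ell(I)-\ell(J)}\,lp(I,J)\,\theta(I-J)$ as a separate proposition before applying (\ref{eq:Psi-through-primitive}), whereas you carry out both expansions in a single step.
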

Before proving  Proposition \ref{eq:powersum-pairing} however, one can get the following using (\ref{eq:Psi-through-S})
\begin{state}
\begin{equation}
\label{eq:monomial-powersum-pairing}
\langle M^I | \Psi^J \rangle = (-1)^{\ell(I) - \ell(J)} lp(I,J) \theta( I - J)
\end{equation}
\end{state}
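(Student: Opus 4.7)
The plan is to expand $\Psi^J$ in the complete basis using the already established identity (\ref{eq:Psi-through-S}), and then invoke the definition of the scalar product, which declares $M^I$ and $S^J$ to be dual bases. This reduces the pairing to a single surviving term from a sum over refinements.

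Concretely, I would first write
\[
\Psi^J = \sum_{K \succeq J} (-1)^{\ell(K) - \ell(J)} \, lp(K,J)\, S^K,
\]
invoking (\ref{eq:Psi-through-S}). Plugging this into $\langle M^I \mid \Psi^J \rangle$ and using bilinearity gives
\[
\langle M^I \mid \Psi^J \rangle = \sum_{K \succeq J} (-1)^{\ell(K) - \ell(J)} \, lp(K,J)\, \langle M^I \mid S^K \rangle.
\]
By Definition \ref{eq:pairing}, $\langle M^I \mid S^K \rangle = \delta_{IK}$, so the only surviving term in the sum is the one with $K = I$, and this term contributes only if $I$ actually appears among the compositions $K \succeq J$. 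That condition is exactly $I \succeq J$, which is encoded by $\theta(I - J)$.

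Combining these observations yields
\[
\langle M^I \mid \Psi^J \rangle = (-1)^{\ell(I) - \ell(J)} \, lp(I,J) \, \theta(I - J),
\]
as claimed. There is no real obstacle here: the identity is essentially a one-line consequence of the transition formula from power sums to complete functions together with the duality of the bases $M^I$ and $S^J$. The only minor point worth checking is the convention that $lp(I,J)$ and $\theta(I-J)$ are both defined with the same ordering ($I \succeq J$), so that the interpretation of the right-hand side matches the sum on the left.
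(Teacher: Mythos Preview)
Your proof is correct and follows essentially the same route as the paper: expand $\Psi^J$ in the complete basis via (\ref{eq:Psi-through-S}), apply the duality $\langle M^I \mid S^K \rangle = \delta_{IK}$, and collapse the sum to the single term $K=I$, which survives precisely when $I \succeq J$.
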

\begin{proof}
\begin{align*}
& \langle M^I | \Psi^J \rangle =
 \langle M^I | \sum_{K \succeq J} (-1)^{\ell(K) - \ell(J)}
lp(K,J) S^K \rangle =  \sum_{K \succeq J} (-1)^{\ell(K) - \ell(J)}
lp(K,J) \langle M^I | S^K \rangle = \\
& = \sum_{K \succeq J} (-1)^{\ell(K) - \ell(J)} lp(K,J) \delta_{IK} =(-1)^{\ell(I) - \ell(J)} lp(I,J)
\theta( I - J)
\qedhere
\end{align*}
\end{proof}
\begin{proof}[Proof of Proposition \ref{eq:powersum-pairing}.]
Using the expansion of power sums in the monomial basis (\ref{eq:Psi-through-primitive}) one has
\begin{equation}
\label{eq:Psi-through-primitive}
\Psi^I = \sum_{J \preceq I} \prod_{k=1}^{\ell(J)} ( \ell(J) - k + 1)^{p_k - p_{k -1}} M^J
\end{equation}
and the pairing between monomials and power sums above (\ref{eq:monomial-powersum-pairing}) I get:
\begin{align*}
& \langle \Psi^I | \Psi^J \rangle = \langle
\sum_{M \preceq I} \prod_{k=1}^{\ell(M)} ( \ell(M) - k + 1)^{p_k - p_{k -1}} M^M | \Psi^J \rangle = \\
& = \sum_{M \preceq I} \prod_{k=1}^{\ell(M)} ( \ell(M) - k + 1)^{p_k - p_{k -1}}
(-1)^{\ell(M) - \ell(J)} lp(M,J)
\theta( M - J) = \\
& = \sum_{J \preceq M \preceq I} (-1)^{\ell(M) - \ell(J)} lp(M,J)
\prod_{k=1}^{\ell(M)} ( \ell(M) - k + 1)^{p_k - p_{k -1}}
\end{align*}
In particular, if $ I = J $, the sum reduces to one term $ M = I = J $, $ p_k = k $, and $ lp(I,I) =
\prod_{k=1}^{\ell(I)} i_k $
\[
 \langle \Psi^I | \Psi^I \rangle = lp(I,I)
\prod_{k=1}^{\ell(I)} ( \ell(I) - k + 1)^{p_k - p_{k -1}} = \left( \prod_{k=1}^{\ell(I)} i_k \right) \ell(I)!
\]
\qedhere
\end{proof}
\begin{corollary}
Since the involution is an isometry of the scalar product
\begin{equation}
\langle \Psi^{\overline{I}} | \Psi^{\overline{J}} \rangle= (-1)^{\ell(I) - \ell(J)} \langle \Psi^I | \Psi^J \rangle
\end{equation}
\end{corollary}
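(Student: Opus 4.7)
The plan is to derive the corollary directly from two facts already in hand: the isometry of $\omega$ (Proposition \ref{prop-isometry}) and its action on power sums, namely $\omega(\Psi^I) = (-1)^{|I|-\ell(I)}\Psi^{\bar{I}}$, as introduced just before equation (\ref{duality-pi}). Both ingredients are established, so no new computation with the explicit pairing formula in Proposition \ref{eq:powersum-pairing} is needed.

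First I would apply the isometry to $\Psi^{\bar{I}}$ and $\Psi^{\bar{J}}$:
\[
\langle \Psi^{\bar{I}} \mid \Psi^{\bar{J}} \rangle = \langle \omega(\Psi^{\bar{I}}) \mid \omega(\Psi^{\bar{J}}) \rangle.
\]
Then, using $\omega(\Psi^{\bar{I}}) = (-1)^{|I|-\ell(I)}\Psi^{I}$ (and similarly for $J$, noting that $|\bar{I}|=|I|$ and $\ell(\bar{I})=\ell(I)$), the scalars come out of the pairing to give
\[
\langle \Psi^{\bar{I}} \mid \Psi^{\bar{J}} \rangle = (-1)^{|I|+|J|-\ell(I)-\ell(J)} \langle \Psi^{I} \mid \Psi^{J} \rangle.
\]

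Next I would observe that the pairing is homogeneous: since $\langle M^K \mid S^L\rangle = \delta_{KL}$ forces $|K|=|L|$ for a nonzero pairing, and both $\Psi^I$ and $\Psi^J$ expand over compositions of weights $|I|$ and $|J|$ respectively (via (\ref{eq:Psi-through-S})), the pairing $\langle \Psi^I \mid \Psi^J\rangle$ vanishes unless $|I|=|J|$. In the nontrivial case $|I|+|J|=2|I|$ is even, so $(-1)^{|I|+|J|}=1$, and the sign collapses to $(-1)^{-\ell(I)-\ell(J)} = (-1)^{\ell(I)-\ell(J)}$ since $-\ell(J)\equiv \ell(J)\pmod 2$. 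In the trivial case both sides vanish, so the equality is automatic.

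I do not anticipate a real obstacle here; the only subtle point is the weight homogeneity argument that absorbs the $(-1)^{|I|+|J|}$ factor, and it follows immediately from the definition of the pairing combined with Proposition \ref{eq:powersum-pairing} (or even from the degree-preserving nature of the transition from $\Psi^I$ to $M^K$ and $S^L$). The whole argument should fit in three or four lines.
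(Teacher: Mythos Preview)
Your proof is correct and follows essentially the same route as the paper: apply the isometry of $\omega$ and then the formula $\omega(\Psi^{\bar I})=(-1)^{|I|-\ell(I)}\Psi^I$. If anything, your version is slightly more careful, since you spell out the weight-homogeneity argument that turns $(-1)^{|I|+|J|-\ell(I)-\ell(J)}$ into $(-1)^{\ell(I)-\ell(J)}$, a step the paper leaves implicit.
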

\begin{proof}
Apply involution $ \omega $ to
\[
\langle \Psi^{\overline{I}} | \Psi^{\overline{J}} \rangle = \langle \omega (\Psi^{\overline{I}}) | \omega( \Psi^{\overline{J}}) \rangle =
\langle (-1)^{|I| - \ell(I)} \Psi^I | (-1)^{|J| - \ell(J)} \Psi^J \rangle = (-1)^{\ell(I) - \ell(J)} \langle \Psi^I | \Psi^J \rangle
\]
\qedhere
\end{proof}
\section{A Noncommutative Identity.}
To further emphasize the extent to which the commutative theory can be extended to the noncommutative setting, I will present a
noncommutative analog of the following identity.
In  the Exercise 10, Ch. I, \S 5 of \cite{M}, it is shown that
\begin{equation}
\label{eq:identity-commutative}
\sum_{|\la|=n} X^{\ell(\la) - 1} m_{\la} = \sum_{k=0}^{n -1} s_{n - k, 1^k} \left( X - 1 \right)^k
\end{equation}

The identity (\ref{eq:identity-commutative}) has the following noncommutative analog:
\begin{state}
\begin{equation}
\label{eq:identity-noncommutative}
\sum_{|I|=n} X^{\ell(I)-1} M^I = \sum_{k=0}^{n - 1} R^{1^k n - k} (X - 1)^k
\end{equation}
\end{state}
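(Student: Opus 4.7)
The plan is to match coefficients of powers of $(X-1)$ on both sides. Since $X$ is a scalar commuting with every $M^I$, one can expand
\[
X^{\ell(I)-1} = \sum_{k=0}^{\ell(I)-1}\binom{\ell(I)-1}{k}(X-1)^k
\]
on the left-hand side and regroup by the exponent of $(X-1)$. This reduces (\ref{eq:identity-noncommutative}) to the family of degree-free identities
\[
R^{1^k(n-k)} \;=\; \sum_{|I|=n}\binom{\ell(I)-1}{k}\,M^I,\qquad 0 \le k \le n-1,
\]
with all dependence on $X$ eliminated.

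The case $k=0$ is the already known identity $R^{(n)} = S_n = \sum_{|I|=n} M^I$. For $1 \le k \le n-1$ I would use formula (\ref{eq:lower-hook-monomial}) (with its parameter $r$ set to $k$ and its $k$ set to $n-k$) to expand $R^{1^k(n-k)}$ in the monomial basis, then read off the coefficient of each $M^K$. Fix $K=(k_1,\ldots,k_m)$ of weight $n$ and let $s_p = k_1+\cdots+k_p$ denote its partial sums, so $0=s_0<s_1<\cdots<s_m=n$. A factorization $K=J\cdot I$ with $|J|=k$ exists precisely when $s_p=k$ for some $1\le p\le m-1$; in that case $\ell(J)+\ell(I)=m$, and the contribution to the coefficient of $M^K$ is $\binom{m-1}{k}$. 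A factorization $K=J\triangleright I$ with $|J|=k$ exists precisely when $s_{r-1}<k<s_r$ for some $1\le r\le m$; in that case $\ell(J)+\ell(I)=m+1$, and the contribution is again $\binom{m-1}{k}$.

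Since $k$ lies strictly between $0$ and $n$, exactly one of these two alternatives holds, and for a unique $p$ (resp.\ $r$). Hence the coefficient of $M^K$ in $R^{1^k(n-k)}$ equals $\binom{m-1}{k}=\binom{\ell(K)-1}{k}$, exactly as required. The only point that needs care is the observation that the two seemingly distinct binomial coefficients in (\ref{eq:lower-hook-monomial}) collapse to the same value once one parameterizes by the integer $k$; modulo this, the argument is short combinatorial bookkeeping, and no substantial obstacle remains. The reduction via $X = 1 + (X-1)$ is what turns the polynomial identity in $X$ into a clean, coefficient-wise consequence of the monomial expansion of $R^{1^k(n-k)}$.
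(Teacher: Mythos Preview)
Your argument is correct. The binomial expansion $X^{\ell(I)-1}=\sum_k\binom{\ell(I)-1}{k}(X-1)^k$ reduces the identity to
\[
R^{1^k(n-k)}=\sum_{|I|=n}\binom{\ell(I)-1}{k}M^I,\qquad 0\le k\le n-1,
\]
and your case analysis via (\ref{eq:lower-hook-monomial}) is sound: for each composition $K$ of $n$ and each $1\le k\le n-1$, the integer $k$ either equals some proper partial sum $s_p$ (giving a unique $J\cdot I$ factorization with $\ell(J)+\ell(I)=\ell(K)$) or lies strictly between two consecutive partial sums (giving a unique $J\triangleright I$ factorization with $\ell(J)+\ell(I)=\ell(K)+1$), and in either case the resulting binomial coefficient is $\binom{\ell(K)-1}{k}$. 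The boundary cases $k=0$ and $k=n-1$ check out as well.

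The paper itself states this proposition without proof, merely noting the specializations at $X=0$ and $X=2$; your argument therefore supplies what the paper omits. It is worth remarking that your proof in fact \emph{derives} (\ref{eq:lower-hook-monomial}) as a by-product in the sharper form $R^{1^k(n-k)}=\sum_{|K|=n}\binom{\ell(K)-1}{k}M^K$, which is a cleaner closed formula than the double-sum presentation in the paper.
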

This is a generalization of two known identities: Corollary 3.14, p. 21 of \cite{GL}
\[
\Psi_n = \sum_{k=0}^{n-1} (-1)^k R^{1^k, n-k}
\]
(at $ X =0 $) and
\[
\sum_{|I|=n} L^I = \sum_{k=0}^{n-1} R^{1^k n -k}
\]
(at $ X = 2 $) stated in \cite{U} in the context of quasi-symmetric functions.
\section{Monomial Symmetric Functions as a Sum of Commutative Images of Noncommutative Monomials.}
\label{sec-monomial}
I now turn to implications of identification of primitive monomials for commutative symmetric functions.

When entries in a matrix commute with each other, quasideterminants become (up to a sign) ratios of the determinant of
the matrix to its principal minor (see \cite{GL}). In particular, denoting the commutative version of the
primitive monomial function $ M^I $ by $ m_I $
\begin{align*}
& m_I =  \frac{(-1)^{(n - 1) + (n - 1)}}{n}\frac{
\begin{vmatrix}
p_{i_n} & 1  & \dots & 0& 0\\
p_{ i_{n -1} + i_n} & p_{i_{n - 1}}  & \ldots & 0 & 0 \\
\vdots &  \vdots & \vdots  &  \vdots & \vdots \\
p_{i_2 + \ldots + i_n} & \ldots  & \ldots&  p_{i_2}& n - 1 \\
p_{i_1 + \ldots + i_n} & \ldots  & \ldots &p_{i_1 + i_2}& p_{i_1}
\end{vmatrix}}
{ \begin{vmatrix}
1 & 0 & \dots & 0& 0\\
 p_{i_{n - 1}} & 2 & \ldots & 0 & 0 \\
  \vdots & \vdots & \vdots &  \vdots & \vdots \\
p_{i_2 + \ldots + i_n} & \ldots & \ldots&  p_{i_2}& n - 1
\end{vmatrix}} = \frac1{ n!}
\begin{vmatrix}
p_{i_n} & 1  & \dots & 0& 0\\
p_{ i_{n -1} + i_n} & p_{i_{n - 1}}  & \ldots & 0 & 0 \\
\vdots &  \vdots & \vdots  &  \vdots & \vdots \\
p_{i_2 + \ldots + i_n} & \ldots  & \ldots&  p_{i_2}& n - 1 \\
p_{i_1 + \ldots + i_n} & \ldots  & \ldots &p_{i_1 + i_2}& p_{i_1}
\end{vmatrix}
\end{align*}
The main purpose of this section is to present evidence indicating
 that $ M^I $ are indeed proper noncommutative analogs of the monomial symmetric function. Denote the commutative image of the noncommutative
 monomial symmetric function $ M^I $ by $ m_I $, then
the following is true:
\begin{state}
\label{mon-det}
\begin{equation}
\label{eq-mon-det}
\widetilde{m}_{\boldsymbol{\mu}} = \sum_{\mathfrak{S_n}} m_{I},
\end{equation}
where the sum is over all permutations of the composition $I$, $ \boldsymbol{\mu} $ is the partition obtained
by ordering parts of $ I$, and $ n = \ell(I)$.  \\
Or, equivalently,
\[
 m_{\boldsymbol{\mu}} = \sum_{I \sim \boldsymbol{\mu} } m_{I},
\]
\end{state}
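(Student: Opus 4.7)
The plan is to work in the classical power-sum basis. Taking the commutative image of (\ref{eq:pi-through-Psi}) gives
$$m_I = \sum_{J \preceq I} \frac{(-1)^{\ell(I) - \ell(J)}}{\prod_{k=0}^{s-1}(\ell(I) - p_k)}\, p_J,$$
where $p_J = p_{j_1}\cdots p_{j_s}$ is now the ordinary commutative product of classical power sums. I would then sum this identity over all $n!$ rearrangements $I = \sigma(\boldsymbol{\mu})$ of the parts of $\boldsymbol{\mu}$ and compare with the known power-sum expansion of $\widetilde{m}_{\boldsymbol{\mu}}$.

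After summing and regrouping by partition shape $\boldsymbol{\nu} = \mathrm{sort}(J)$, the result takes the form $\sum_{\boldsymbol{\nu}} c_{\boldsymbol{\mu},\boldsymbol{\nu}}\, p_{\boldsymbol{\nu}}$. The combinatorial content of $c_{\boldsymbol{\mu},\boldsymbol{\nu}}$ is transparent: a pair $(\sigma, J)$ with $\sigma\in\mathfrak{S}_n$ and $J \preceq \sigma(\boldsymbol{\mu})$ of partition shape $\boldsymbol{\nu}$ is the same data as an ordered decomposition of the multiset of parts of $\boldsymbol{\mu}$ into $s$ consecutive blocks whose block-sums are the parts of $\boldsymbol{\nu}$ in some order. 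Crucially, the signed weight $(-1)^{n-s}/\prod_{k=0}^{s-1}(n - p_k)$ depends only on the block-size profile $(k_1,\ldots,k_s)$ with $k_j = p_j - p_{j-1}$ and not on the contents of the blocks, so $c_{\boldsymbol{\mu},\boldsymbol{\nu}}$ factors as a purely combinatorial count of ordered block decompositions with prescribed block sums, times a purely numerical sum over block-size profiles.

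To conclude, I would match this with the classical expansion of $\widetilde{m}_{\boldsymbol{\mu}}$ in power sums, which is standard (Macdonald, I.6, Ex.~10): it is likewise a signed sum over exactly the same ordered block decompositions. Since the combinatorial counts on the two sides agree tautologically, the proof reduces to a purely numerical identity, for each profile $(k_1,\ldots,k_s)$ with $k_1+\cdots+k_s=n$, relating the quasideterminant denominator $n\prod_{k=0}^{s-1}(n-p_k)$ to the classical prefactor $n!/u_{\boldsymbol{\mu}}$ appearing in $\widetilde{m}_{\boldsymbol{\mu}}$; this is a telescoping-type manipulation.

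The main obstacle is the last step: tracking signs and reconciling the quasideterminant denominator with the classical prefactor once the two sides are organized by block-size profile. An alternative and potentially cleaner route I would try in parallel is induction on $n = \ell(\boldsymbol{\mu})$, using the commutative specialization of (\ref{eq-linear}),
$$n\, m_I = \sum_{k=1}^{n}(-1)^{k-1}\, p_{i_1+\cdots+i_k}\, m_{(i_{k+1},\ldots,i_n)},$$
summed over all rearrangements of $\boldsymbol{\mu}$, and checking that the resulting recursion matches the standard recursion for $\widetilde{m}_{\boldsymbol{\mu}}$ obtained by extracting a distinguished part.
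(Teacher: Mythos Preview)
Your primary route---expand both sides in power sums and match coefficients---is not what the paper does, and you correctly identify the final numerical identity as the obstacle. In fact your formulation of that identity is not quite right: the classical prefactor in the $p$-expansion of $\widetilde m_{\boldsymbol{\mu}}$ is not simply $n!/u_{\boldsymbol{\mu}}$, and the coefficient you need depends on the block \emph{contents}, not only the block-size profile, through the partition $\boldsymbol{\nu}$. So while the strategy is sound in principle, what remains is the entire combinatorial heart of the statement, not a routine telescoping.

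The paper instead takes your alternative inductive route, but with a sharper choice of recursion. Rather than summing the Newton relation~(\ref{eq-linear}) over all rearrangements, it uses the Pieri rule~(\ref{eq-pieri}) in its commutative image,
\[
p_r\cdot m_I \;=\; (n+1)\,m_{(r)\cdot I} + n\,m_{(r)\triangleright I},
\]
paired with the classical Pieri rule for augmented monomials, $p_r\cdot\widetilde m_{\boldsymbol{\kappa}}=\sum_{\boldsymbol{\lambda}/\boldsymbol{\kappa}=(r)}\widetilde m_{\boldsymbol{\lambda}}$. It also strengthens the induction hypothesis to the asymmetric form $\widetilde m_{\boldsymbol{\mu}}=n\sum_{\mathfrak{S}_{n-1}}m_I$ (one part held fixed), from which (\ref{eq-mon-det}) follows by averaging. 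In the inductive step, multiplying $\widetilde m_{\boldsymbol{\mu}}=\sum_{\mathfrak S_n}m_I$ by $p_r$, the $(n+1)m_{(r)\cdot I}$ terms give exactly the desired sum for $\boldsymbol{\mu}\oplus(r)$, while the $n\,m_{(r)\triangleright I}$ terms, grouped by which part absorbs $r$, reproduce $\sum_j\widetilde m_{\mu_1,\ldots,r+\mu_j,\ldots,\mu_n}$ by the induction hypothesis; subtracting this from $p_r\cdot\widetilde m_{\boldsymbol{\mu}}$ via the classical Pieri rule finishes the step.

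The payoff is that there is no numerical identity to chase: the two Pieri rules line up term by term. If you pursue your Newton-relation induction instead, you will have to handle all $k\ge 2$ terms simultaneously, which effectively reintroduces the bookkeeping you were trying to avoid.
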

where the sum is over all distinct composition obtained by permutating parts of $ \boldsymbol{\mu} $.\\
In fact I will prove a slightly stronger (and slightly less symmetric) statement (\ref{eq-mondet-1}). Equality
(\ref{eq-mon-det}) will be an immediate consequence of (\ref{eq-mondet-1}).

The argument requires the following well-known result (see, for instance, \cite{Na}):
\begin{lemma}{Pieri formula for monomial symmetric functions.}
\begin{equation}
\label{lemma}
p_r \cdot \widetilde{m}_{\ka} = \sum_{\la } \widetilde{m}_{\la},
\end{equation}
where the sum is over all $\la $ such that $\la / \ka  = (r) $, i.e a skew diagram $ \la / \ka$ is a row with the
length $r$.
\end{lemma}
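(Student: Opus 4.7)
The plan is a direct monomial-by-monomial computation based on the identity
\begin{equation*}
\widetilde{m}_\ka(x) \;=\; \sum_{(i_1, \ldots, i_k) \text{ distinct}} x_{i_1}^{\ka_1} \cdots x_{i_k}^{\ka_k},
\end{equation*}
where $k = \ell(\ka)$ and the sum is over all ordered $k$-tuples of pairwise distinct indices. This is merely $\widetilde{m}_\ka = u_\ka m_\ka$ rewritten so that the augmentation factor $u_\ka$ exactly absorbs the over-counting produced by ordering the parts. The advantage of this form is that multiplication by $p_r = \sum_j x_j^r$ becomes a clean combinatorial operation, with no multinomial coefficients to worry about.

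Multiplying out yields a sum over $(j, i_1, \ldots, i_k)$ with $(i_1, \ldots, i_k)$ distinct, which I split into $k+1$ disjoint cases according to whether $j$ coincides with some (unique) $i_\ell$ or is disjoint from the entire tuple. In the first case the exponent at slot $i_\ell$ becomes $\ka_\ell + r$ while the rest of the tuple is unchanged, and reassembly gives $\widetilde{m}_{\la^{(\ell)}}$ with $\la^{(\ell)} = \mathrm{sort}(\ka_1, \ldots, \ka_\ell + r, \ldots, \ka_k)$. In the second case we gain an additional distinct slot carrying exponent $r$, yielding $\widetilde{m}_{\la^{(0)}}$ with $\la^{(0)} = \mathrm{sort}(\ka_1, \ldots, \ka_k, r)$. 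The cases $\ell \geq 1$ correspond to extending the $\ell$-th row of the diagram of $\ka$ by $r$ cells, and $\ell = 0$ corresponds to attaching a new bottom row of length $r$; in each case the skew shape $\la^{(\ell)}/\ka$ is a row of length $r$ in the sense of the lemma, and every such $\la$ on the right-hand side arises from exactly one such row-attachment.

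The one delicate point is tracking multiplicities when several parts of $\ka$ coincide, or when $r$ equals an existing part of $\ka$: distinct attachment indices $\ell$ may then yield the same sorted partition $\la$, but the sum on the right-hand side of \eqref{lemma} is to be read as a sum over row attachments rather than over distinct $\la$'s, so the two sides match automatically. This multiplicity bookkeeping is the main item to verify with care, but it follows directly from the ordered-tuple formulation above once the $k+1$ cases are set up — no separate inductive or generating-function argument is needed.
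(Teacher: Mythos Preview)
Your argument is correct and is the standard direct proof. Note, however, that the paper does not actually prove this lemma: it is quoted as a ``well-known result'' with a reference to \cite{Na}, and is then used as a black box in the proof of Proposition~\ref{mon-det}. So there is no paper proof to compare against.

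Two remarks on your write-up. First, your reading of the right-hand side as a sum over \emph{attachment indices} $\ell\in\{0,1,\dots,k\}$ (rather than over distinct partitions $\la$) is exactly what the paper intends: in its application of the lemma it writes $p_r\,\widetilde m_{\boldsymbol{\mu}} = \widetilde m_{\boldsymbol{\mu}\oplus(r)} + \sum_{j}\widetilde m_{\mu_1,\dots,r+\mu_j,\dots,\mu_n}$, summing over the index $j$ and thus counting with multiplicity when parts repeat. The ``skew diagram is a row'' phrasing in the lemma is informal and should not be read in the strict Young-diagram sense (e.g.\ for $\ka=(3,1)$, $r=5$ one of the terms is $\widetilde m_{(6,3)}$, whose skew shape over $(3,1)$ is not a single row in the usual sense). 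Your interpretation is the right one.

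Second, the identity $\widetilde m_{\ka}=\sum_{(i_1,\dots,i_k)\text{ distinct}}x_{i_1}^{\ka_1}\cdots x_{i_k}^{\ka_k}$ relies on $u_{\ka}=\prod_i \mathtt m_i(\ka)!$; the paper's displayed definition~(\ref{u}) omits the factorial, but the reference to Macdonald's exercise makes clear that the factorial is intended, so your starting point is legitimate.
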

\begin{state}
Consider a partition $ \boldsymbol{\mu}=( \mu_1, \ldots, \mu_n) $ and the augmented monomial symmetric
function corresponding to this partition $ \widetilde{m}_{\boldsymbol{\mu}} $. Pick an arbitrary part $ \mu_j $ of the
partition $  \boldsymbol{\mu} $, then
\begin{equation}
\label{eq-mondet-1}
\widetilde{m}_{\boldsymbol{\mu}} = n \sum_{\mathfrak{S_{n-1}}}  m_{I},
\end{equation}
where the sum is over all possible compositions that can be
obtained from $  \boldsymbol{\mu} $ by permuting  all  parts while keeping $ \mu_j $ fixed.
\end{state}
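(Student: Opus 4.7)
The plan is to argue by induction on $n = \ell(\boldsymbol{\mu})$, establishing (\ref{eq-mondet-1}) together with the weaker identity (\ref{eq-mon-det}) (the latter follows from the former by summing over the choice of distinguished index and dividing by $n$, so both are available at each step of the recursion). The base case $n = 1$ is immediate, since both sides reduce to $p_{\mu_1}$. For the inductive step, fix a partition $\boldsymbol{\mu}$ of length $n$ together with a distinguished part $\mu_j$, write $\boldsymbol{\mu}'$ for the length-$(n-1)$ partition obtained by deleting $\mu_j$, and read the right-hand side of (\ref{eq-mondet-1}) as a sum (with multiplicity) over all $(n-1)!$ orderings $I'$ of the parts of $\boldsymbol{\mu}'$, with $\mu_j$ prepended as the first entry of the length-$n$ composition $(\mu_j)\cdot I'$.

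The first move is to apply the commutative specialization of the Pieri rule (\ref{eq-pieri}),
\[
p_r\, m_{I'} = n\, m_{(r)\cdot I'} + (n-1)\, m_{(r)\triangleright I'}\qquad (\ell(I') = n-1),
\]
with $r = \mu_j$, solved for $n\, m_{(\mu_j)\cdot I'}$. Substituting into $R_j(\boldsymbol{\mu}) := n \sum_{I'} m_{(\mu_j)\cdot I'}$ and interchanging summations yields
\[
R_j(\boldsymbol{\mu}) \;=\; p_{\mu_j}\sum_{I'} m_{I'} \;-\; (n-1)\sum_{I'} m_{(\mu_j)\triangleright I'},
\]
and the first sum on the right equals $\widetilde{m}_{\boldsymbol{\mu}'}$ by the weaker inductive hypothesis applied to $\boldsymbol{\mu}'$. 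To evaluate the $\triangleright$-sum, group the orderings $I'$ by the value of their first entry: for each index $k\neq j$, exactly $(n-2)!$ orderings have $i'_1 = \mu_k$, and for each such $I'$ the composition $(\mu_j)\triangleright I'$ begins with $\mu_j+\mu_k$ followed by an arbitrary ordering of $\boldsymbol{\mu}'\setminus\{\mu_k\}$. Let $\boldsymbol{\nu}^{(k)}$ denote the length-$(n-1)$ partition obtained from $\boldsymbol{\mu}$ by amalgamating the two parts $\mu_j,\mu_k$ into a single part $\mu_j+\mu_k$; the strong inductive hypothesis applied to $\boldsymbol{\nu}^{(k)}$ with $\mu_j+\mu_k$ as the distinguished part converts each inner sum into $\widetilde{m}_{\boldsymbol{\nu}^{(k)}}/(n-1)$, leaving
\[
R_j(\boldsymbol{\mu}) \;=\; p_{\mu_j}\widetilde{m}_{\boldsymbol{\mu}'} \;-\; \sum_{k\neq j}\widetilde{m}_{\boldsymbol{\nu}^{(k)}}.
\]

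The induction is then closed by invoking the classical Pieri rule for augmented monomials (Lemma (\ref{lemma})): the partitions $\boldsymbol{\lambda}$ for which $\boldsymbol{\lambda}/\boldsymbol{\mu}'$ is a horizontal $\mu_j$-strip contained in a single row are precisely $\boldsymbol{\mu}$ itself (adjoining $\mu_j$ as a new part) together with the $\boldsymbol{\nu}^{(k)}$ (lengthening some existing part $\mu_k$ of $\boldsymbol{\mu}'$ by $\mu_j$ boxes), so
\[
p_{\mu_j}\widetilde{m}_{\boldsymbol{\mu}'} \;=\; \widetilde{m}_{\boldsymbol{\mu}} \;+\; \sum_{k\neq j}\widetilde{m}_{\boldsymbol{\nu}^{(k)}}.
\]
Substituting cancels the $\widetilde{m}_{\boldsymbol{\nu}^{(k)}}$ contributions and leaves $R_j(\boldsymbol{\mu}) = \widetilde{m}_{\boldsymbol{\mu}}$, completing the induction. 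The delicate point that will require the most care is the multiplicity bookkeeping when $\boldsymbol{\mu}$ has repeated parts: the index sum $\sum_{k\neq j}$ counts each distinct $\boldsymbol{\nu}^{(k)}$ with multiplicity equal to the number of positions of $\boldsymbol{\mu}'$ carrying the value being extended, while the augmented Pieri rule counts each such position separately as well, so the two collections of terms coincide as multisets and the cancellation is exact.
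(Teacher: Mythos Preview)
Your argument is correct and follows essentially the same route as the paper: induction on the length, the commutative Pieri rule (\ref{eq-pieri}) to peel off the distinguished part, the inductive hypotheses (\ref{eq-mon-det}) and (\ref{eq-mondet-1}) to identify the two resulting sums, and the classical Pieri rule (\ref{lemma}) to close the induction. The only cosmetic difference is that the paper phrases the step as ``add a part $r$ to a length-$n$ partition'' while you phrase it as ``delete the distinguished part $\mu_j$ from a length-$n$ partition''; the computations line up term for term, and your explicit bookkeeping of multiplicities when parts repeat is if anything more careful than the paper's.
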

Observe
that Proposition \ref{mon-det} immediately follows from (\ref{eq-mondet-1}) using the latter $ n $ times
for each part of $ \boldsymbol{\mu} $ in turn and averaging the results.
\begin{proof}
The proof will proceed by induction in number of parts of $  \boldsymbol{\mu} $.
When $ \boldsymbol{\mu} $ has one part, $ m_{(r)}= p_r = m_r $ and there is nothing to prove.

Suppose that the statement of the equation (\ref{eq-mondet-1}) is true for $ \boldsymbol{\mu} $, such that
$ \ell(\boldsymbol{\mu}) = n $.

I have to prove that (\ref{eq-mondet-1}) is true for a partition with $ n + 1 $ parts.
Denote the partition resulting in adding of part $ (r) $ to $ \boldsymbol{\mu}  $ by   $ \boldsymbol{\mu} \oplus (r)$.
Then the objective can be stated as follows:
\[
\widetilde{m}_{\boldsymbol{\mu} \oplus (r)} = (n +1) \sum_{\mathfrak{S_{n}}}  m_{I}
 \]

Consider the product of the power sum $ p_r $ and $ \widetilde{m}_{\boldsymbol{\mu}} $ with
$ \ell(\boldsymbol{\mu}) = n $ so that (\ref{eq-mondet-1}) and therefore (\ref{eq-mon-det}) are valid:
\begin{align*}
& p_r \cdot \widetilde{m}_{\boldsymbol{\mu}} \stackrel{ \text{ by } (\ref{eq-mon-det})}{=}
\sum_{\mathfrak{S_n}} p_r \cdot m_{I} \stackrel{ \text{ by } (\ref{eq-pieri})}{=}
\sum_{\mathfrak{S_n}} \left( ( n + 1) m_{r, \sigma(\mu_1), \ldots, \sigma(\mu_n)} +
 n m_{r + \sigma(\mu_1), \ldots, \sigma(\mu_n)} \right) = \\
 & \stackrel{ \text{by induction hypothesis} }{=}
 \sum_{\mathfrak{S_n}} ( n + 1) m_{r, \sigma(\mu_1), \ldots, \sigma(\mu_n)} +
 \sum_j \widetilde{m}_{\mu_1, \ldots, r + \mu_j, \ldots, \mu_n},
\end{align*}
i.e.
\[
 \sum_{\mathfrak{S_n}} ( n + 1) m_{r, \sigma(\mu_1), \ldots, \sigma(\mu_n)} =
 p_r \cdot \widetilde{m}_{\boldsymbol{\mu}} -
 \sum_j \widetilde{m}_{\mu_1, \ldots, r + \mu_j, \ldots, \mu_n}
 \stackrel{ \text{ by } (\ref{lemma})}{=} \widetilde{m}_{\boldsymbol{\mu} \oplus (r)}
\]
\qedhere
\end{proof}
\section{Summary and Open Questions.}
Once one accepts that there is a meaningful analog of noncommutative monomial $ M^I $ and fundamental symmetric functions $ L^I $,
 it is natural to inquire, first of all, whether there exist $ q - $ and $ q, t - $ deformations that interpolate between
$ M^I $ and Schur-like bases: fundamental and/or ribbon Schur functions just like Hall-Littlewood and Macdonald polynomials do in the
commutative case.
 These functions would necessarily differ from existing $ q - $ and $ q, t - $ of \cite{H} and \cite{Z} as these polynomials do not
 interpolate. \\
 Secondly, the pairing between fundamental quasi-symmetric  functions and ribbon Schur functions has a nice representation-theoretic
 explanation \cite{Th}. It would be interesting to see if there is a representation-theoretic interpretation of noncommutative Cauchy
 identity and the meaning of Kostka-Gessel numbers in that context.


\begin{thebibliography}{99}
\bibitem{M} I.G. Macdonald, \textbf {Symmetric Functions and Hall Polynomials,} 2nd edition, Oxford Univ. Press, 1995;
\bibitem{L} A. Lascoux, \textbf {Symmetric Functions}, Notes of the course given at Nankai University, October-November 2001;
\bibitem{GL} Israel M. Gelfand, Daniel Krob, Alain Lascoux, Bernard Leclerc, Vladimir S. Retakh,
Jean-Yves-Thibon, \textit{Noncommutative symmetric functions}, Adv. Math. \textbf{112}, 218-348 (1995);
\bibitem{GGR} Israel M. Gelfand, Sergei Gelfand, Vladimir S. Retakh,
Robert Lee Wilson, \textit{Quasideterminants},  Adv. Math.  \textbf{193},  no. 1, 56 - 141 (2005);
\bibitem{GR2} Israel M. Gelfand, Vladimir S. Retakh, \textit{Quasideterminants, I}, Selecta Math. (N.S.) \textbf{3}, 93-100
(1997);
\bibitem{GR1} Israel M. Gelfand, Vladimir S. Retakh, \textit{Determinants of Matrices over Noncommutative Rings},
Func. Anal. Appl. \textbf{25}, 91-102 (1991);
\bibitem{S} R. Stanley, \textbf{ Enumerative Combinatorics}, Vol. 2, Cambridge Univ. Press, 1999;
\bibitem{E} \"{O}. E\~{g}ecio\~{g}lu, J. R. Remmel, Discrete Applied Math. {\bf {34}}, 107 - 120 (1991);
\bibitem{U} B.-C.-V. Ung, \textit{Combinatorial Identities for Series of Quasi-Symmetric Functions}, (1998);
\bibitem{G} I. Gessel, \textit{Multipartite P-partitions and inner product of skew Schur functions}, Contemp. Math. \textbf{34}, 289-301 (1984);
\bibitem{Ma} C. Malvenuto, \textit{Produit et Coproduit des Fonctions Quasi-Symetriques},
Publications du Laboratoire de Combinatoire et d'Informatique Mathématique - UQAM, 16  (1994);
\bibitem{Na} H. Nakajima, \textbf{Lectures on Hilbert Schemes of Points on Surfaces}, AMS 1999;
\bibitem{D} F. Descouens, \textit{Ribbon tableaux, ribbon rigged configurations and Hall-Littlewood functions
at roots of unity}, CO/0701221v1;
\bibitem{H} F. Hivert, \textit{Hecke algebras, difference operators, and quasi-symmetric functions}, Adv. Math. \textbf{155} ,
no. 2, 181 - 238 (2000);
\bibitem{Z} N. Bergeron, M. Zabrocki,
\textit{$q$ and $q$, $t$-analogs of non-commutative symmetric functions},
Discrete Math. \textbf{298} , no. 1-3, 79 - 103 (2005);
\bibitem{N} F. Hivert, J.-C. Novelli, L. Tevlin, and J.-Y. Thibon, \textit{Permutations, Monomial \textsf{NCSF} and Genocchi Numbers},
to be submitted;
\bibitem{Th} D. Krob, J.-Yves Thibon,  \textit{Noncommutative symmetric functions. IV. Quantum linear groups and Hecke algebras at $q=0$},
 J. Algebraic Combin.  \textbf{6}  ,  no. 4, 339 - 376 (1997);
\bibitem{LT1} L. Tevlin, Proceeding of FPSAC 2007, Tianjin;
\bibitem{LT} L. Tevlin, N. Tishbi, \textit{Kaleidoscopic Expansion of Quasideterminants of Almost-Triangular Matrices},
to be submitted;
\end{thebibliography}
\end{document}